\documentclass[a4paper,12pt,intlimits,oneside,reqno]{amsart}
\usepackage{enumerate}
\usepackage{amsfonts,amsmath, amsxtra,amssymb,latexsym, amscd,amsthm}

\usepackage{latexsym,amssymb}
\newtheorem{theorem}{Theorem}[section]

\newtheorem{lemma}[theorem]{Lemma}

\theoremstyle{corollary}
\newtheorem{corollary}[theorem]{Corollary}

\theoremstyle{definition}
\newtheorem{definition}[theorem]{Definition}

\theoremstyle{remark}

\numberwithin{equation}{section}

%    Absolute value notation

%    Blank box placeholder for figures (to avoid requiring any
%    particular graphics capabilities for printing this document).

\textwidth14cm \textheight21cm \evensidemargin.2cm
\oddsidemargin1.2cm

\addtolength{\headheight}{3.2pt}
\newcommand{\comment}[1]{}
\allowdisplaybreaks

\begin{document}

\title [Rough Hausdorff operator]{Weighted Morrey-Herz space estimates for rough Hausdorff operator and its commutators }

\author{Nguyen Minh Chuong}

\address{Institute of mathematics, Vietnamese  Academy of Science and Technology,  Hanoi, Vietnam.}
\email{nmchuong@math.ac.vn}
\thanks{This paper is supported by Vietnam National Foundation for Science and Technology Development (NAFOSTED)}

\author{Dao Van Duong}
\address{School of Mathematics,  Mientrung University of Civil Engineering, Phu Yen, Vietnam.}
\email{daovanduong@muce.edu.vn}

\author{Nguyen Duc Duyet}
\address{Vinh Phuc vocational college, Vinh Phuc, Vietnam.}
\email{duyetnguyenduc@gmail.com}
\keywords{Rough Hausdorff operator, Herz space, central Morrey space,  Morrey-Herz space, commutators, weights.}
\subjclass[2010]{Primary 42B20, 42B25; Secondary 42B99}
\begin{abstract}
 In this paper, we give necessary and sufficient conditions for the boundedness of rough Hausdorff operators on Herz, Morrey and Morrey-Herz spaces with absolutely homogeneous weights. Especially, the estimates for operator norms in each case are worked out. Moreover, we also establish the boundedness of the commutators of rough Hausdorff operators on the two weighted Morrey-Herz type spaces with their symbols belonging to Lipschitz space.
\end{abstract}

\maketitle

\section{Introduction}\label{section1}
Let $\Phi(t)$ be a locally integrable function in $(0,\infty)$. The one dimensional Hausdorff operator is defined in terms of the integral form as follows
\begin{align}
{\mathcal H}_{\Phi}f(x)=\int_0^{\infty}\frac{\Phi(t)}{t}f\left(\frac{x}{t} \right)dt.
\end{align}
It is well known that the Hausdorff operator is one of important operators in harmonic analysis, and it closely related to the summability of the classical Fourier series.  It is worth pointing out that if the kernel function $\Phi$ is taken appropriately, then  the Hausdorff operator reduces to many classcial operators in analysis such as the Hardy operator, the Ces\`{a}ro operator, the Riemann-Liouville fractional integral operator and the Hardy-Littlewood average operator (see, e.g., \cite{Andersen}, \cite{Christ}, \cite{FGLY2015}, \cite{Miyachi} and references therein).
\vskip 5pt
In 2002, Brown and M\'{o}ricz \cite{BM} extended the study of  Hausdorff operator to the high dimensional space which is defined as follows
\begin{equation}\label{Hausdorff1}
H_{\Phi, A}(f)(x)=\int\limits_{\mathbb R^n}{\frac{\Phi(t)}{|t|^n}f(A(t) x)dt},\,x\in\mathbb R^n,
\end{equation}
where $\Phi$ is a locally integrable function on $\mathbb R^n$, and $A(t)$ is an $n\times n$ invertible matrix for almost everywhere $t$ in the support of $\Phi$.  It should be pointed out that if  the kernel function $\Phi$ and $A(t)$ are chosen suitably, then $H_{\Phi,A}$ reduces to the weighted Hardy-Littlewood average operator, the weighted Hardy-Ces\`{a}ro operator (see \cite{Carton-Lebrun, CH2014}). More generally, Chuong, Duong and Dung \cite{HausdoffCDD} recently have introduced a more general multilinear operator of Hausdorff type
\begin{align*}
H_{\Phi,\vec{A}}(\vec{f})(x)=\int_{\mathbb{R}^n}\frac{\Phi(t)}{|t|^n}\prod_{i=1}^m f_i(A_i(t)x)dt,x\in\mathbb{R}^n,
\end{align*}
where $\Phi:\mathbb{R}^n\to [0,\infty)$ and $A_i(t)$, for $i=1,...,m$ are $n\times n$ invertible matrices for almost everywhere $t$ in the support of $\Phi$, and $f_1,f_2,...,f_m:\mathbb{R}^n\to \mathbb{C}$ are measurable functions.
\vskip 5pt
It is interesting to see that the theory of weighted Hardy-Littlewood average operators, Hardy-Ces\`{a}ro operators and  Hausdorff operators has been significantly developed into different contexts (for more details see \cite{CH2014},  \cite{BM}, \cite{Moricz2005},  \cite{HausdoffCDD}, \cite{CHH2017},  \cite{Xiao2001} and references therein). In 2016, Chuong, Duong and Hung \cite{CDH2016} studied the boundedness of the Hardy-Ces\`{a}ro operators and their commutators on weighted Herz, Morrey and Morrey-Herz spaces with absolutely homogeneous weights. In  2012, Chen, Fan and Li \cite{CFL2012} introduced another version of Hausdorff operators, so-called the rough Hausdorff operators, as follows  
\begin{equation}\label{RoughHausdorf1}
{{\mathcal H}}_{\Phi,\Omega}(f)(x)=\int_{\mathbb R^n}\dfrac{\Phi\left(x|y|^{-1}\right)}{|y|^n}\Omega\left(y|y|^{-1}\right)f(y)dy,\;\;x\in\mathbb R^n,
\end{equation}
where $\Phi: \mathbb R^n\longrightarrow \mathbb C $ and $\Omega: S^{n-1}\longrightarrow \mathbb C $ are Lebesgue measurable functions.
Note that if $\Phi$ is a radial function, then by using the change of variable in polar coordinates, the operator ${{\mathcal H}}_{\Phi,\Omega}$ is rewritten in terms of  the following form
\begin{equation}\label{RoughHausdorf}
{{\mathcal H}}_{\Phi,\Omega}(f)(x)=\int_0^{\infty}\int_{S^{n-1}} \dfrac{\Phi(t)}{t}\Omega(y')f(t^{-1}|x|y')d\sigma(y')dt.
\end{equation}
It is useful to remark that if we choose $\Phi(t)=t^{-n}\chi_{(1,\infty)}(t)$ and $\Omega \equiv 1$, the rough Hausdorff operator ${{\mathcal H}}_{\Phi,\Omega}$ reduces to the famous Hardy operator
 \begin{equation}\label{hardyoperator}
{{\mathcal H}}(f)(x)=\frac{1}{|x|^n}\int_{|y|\leq |x|}f(y)dy.
\end{equation}
Also, if  $\Omega \equiv 1$ and $\Phi(t)=\chi_{(0, 1)}(t)$, the ${{\mathcal H}}_{\Phi,\Omega}$ reduces to the adjoint Hardy operator 
 \begin{equation}\label{hardyoperator}
{{\mathcal H^\star}}(f)(x)=\int_{|y|> |x|} \frac{f(y)}{|y|^n}dy.
\end{equation}
Moreover, Chen, Fan and Li \cite{CFL2012} revealed that the rough Hausdorff operators have better performance on the Herz type Hardy spaces $H{\dot{K}}_q^{\alpha,p}(\mathbb{R}^n)$ than their performance on the Hardy spaces $H^p(\mathbb{R}^n)$ when $0 < p < 1$. Meanwhile, the authors obtained some new results and generalized some known results for the high dimensional Hardy operator as well as the adjoint Hardy operator.
\vskip 5pt
 Let $b$ be a measurable function. Let $\mathcal{M}_b$ be the multiplication operator defined  by $\mathcal{M}_bf (x)=b(x) f (x)$ for any measurable function $f$. If $\mathcal{H}$ is a linear operator on some measurable function space, the commutator of Coifman-Rochberg-Weiss type formed by $\mathcal{M}_b$  and $\mathcal{H}$ is defined by $[\mathcal{M}_b, \mathcal{H}]f (x)=(\mathcal{M}_b\mathcal{H}-\mathcal{H}\mathcal{M}_b) f (x)$. In particular, if  $\mathcal{H}={{\mathcal H}}_{\Phi,\Omega}$, then we have the commutators of Coifman-Rochberg-Weiss type of the rough Hausdorff operator given as follows
\begin{align}\label{Commu}
{{\mathcal H}}_{\Phi,\Omega}^bf(x)&=b(x){{\mathcal H}}_{\Phi,\Omega}f(x)-{{\mathcal H}}_{\Phi,\Omega}(bf)(x)\notag\\
&=\int\limits_0^{\infty}\int\limits_{S^{n-1}}\frac{\Phi(t)}{t}\Omega(y')f(|x|t^{-1}y') \left[b(x)-b(|x|t^{-1}y') \right]d\sigma(y') dt.
\end{align}
\vskip 5pt
Inspired by above mentioned results, the goal of this paper is to extend and develop the known results in \cite{CDH2016} to rough Hausdorff operators setting. More precisely, we establish the necessary and sufficient conditions for the boundedness of rough Hausdorff operators  on weighted Herz, central Morrey, and Morrey-Herz spaces with absolutely homogeneous weights. In each case, the estimates for operator norms are worked out. Also, the sufficient conditions for the boundedness of the commutators of rough Hausdorff operators on the two weighted Morrey-Herz type spaces with their symbols belonging to Lipschitz space is given.
\vskip 5pt
Our paper is organized as follows. In Section 2, we give necessary preliminaries for Herz spaces, central Morrey spaces and Morrey-Herz spaces as well as the class of absolutely homogeneous weights. Our main theorems are given and proved in Section 3.
\section{Preliminaries}\label{section2}

Before stating our results in the next section, let us give some basic facts and notations which will be used throughout this paper. By $\|T\|_{X\to Y}$, we denote the norm of $T$ between two normed vector spaces $X,Y$. The letter $C$ denotes a positive constant which is independent of the main parameters, but may be different from line to line. For any $a\in\mathbb R^n$ and $r>0$, we shall denote by $B(a,r)$ the ball centered at $a$ with radius $r$. We also denote $S^{n-1}=\{ x\in\mathbb{R}^n: |x|=1\}$ and $|S^{n-1}|=\frac{2\pi^{\frac{n}{2}}}{\Gamma\left( \frac{n}{2}\right)}$. For any real number $p>0$, denote by $p'$ conjugate real number of $p$, i.e. $\frac{1}{p}+\frac{1}{p'}=1$.
\vskip 5pt
Next, we write $a \lesssim b$ to mean that there is a positive constant $C$ , independent of the main parameters, such that $a\leq Cb$. The symbol $f\simeq g$ means that $f$ is equivalent to $g$ (i.e.~$C^{-1}f\leq g\leq Cf$). Thoughout the paper,  the weighted function $\omega(x)$ will be denoted a nonnegative measurable function on $\mathbb{R}^n$,  and let $L^q_{\omega}(\mathbb{R}^n)$ $(0<q<\infty)$ be the space of all Lebesgue measurable functions $f$ on $\mathbb{R}^n$ such that
\begin{align*}
\|f\|_{q,\omega}=\left(\int_{\mathbb{R}^n}|f(x)|^q\omega(x)dx \right)^{\frac{1}{q}}<\infty.
\end{align*} 
The space $L^q_\text {loc}(\omega, \mathbb R^n)$ is defined as the set of all measurable functions $f$ on $\mathbb R^n$ satisfying $\int_{K}|f(x)|^q\omega(x)dx<\infty$ for any compact subset $K$ of $\mathbb R^n$. The space $L^q_\text {loc}(\omega, \mathbb R^n\setminus\{0\})$ is also defined in a similar way to the space  $L^q_\text {loc}(\omega, \mathbb R^n)$.
\vskip 5pt
In the following definitions $\chi_k=\chi_{C_k}$, $C_k=B_k\setminus B_{k-1}$ and $B_k = \big\{x\in \mathbb R^n: |x| \leq 2^k\big\}$, for all $k\in\mathbb Z.$ Now, we are in a position to give some definitions of the Lipschitz , Herz, Morrey and Morrey-Herz spaces. For further information on these spaces as well as their deep applications in analysis, the interested readers may refer to the work \cite{ALP2000} and to the monograph \cite{LYH2008}.

\begin{definition}
Let $0<\beta\leq 1$. The Lipschitz space $Lip^\beta(\mathbb{R}^n)$ is defined as the set of all functions $f:\mathbb{R}^n\to \mathbb{C}$ such that
\begin{align*}
\|f\|_{Lip^\beta(\mathbb{R}^n)}:=\sup_{x,y\in \mathbb{R}^n,\,x\ne y}\frac{|f(x)-f(y)|}{|x-y|^\beta}<\infty.
\end{align*}
\end{definition}

\begin{definition}
Let $\lambda \in \mathbb R$ and $1 \leq p < \infty$. The weighted central Morrey space ${\mathop B\limits^.}_{\omega}^{p,\lambda}(\mathbb R^n)$
is defined as the set of all locally $p$-integrable functions $f$ satisfying
\[
{\big\| f \big\|_{{\mathop B\limits^.}_{\omega}^{p,\lambda }({\mathbb R^n})}} = \mathop {\sup }\limits_{R\, > 0} {\Big( {\frac{1}{{{\omega}\big(B(0,R)\big)^{1+\lambda p}}}\int\limits_{B(0,R)} {{{\left| {f(x)} \right|}^p}\omega(x)dx} } \Big)^{1/p}} < \infty .
\]
\end{definition}

\begin{definition}
Let $\alpha\in\mathbb R$, $0<q<\infty$, and $0<p<\infty$. The weighted homogeneous Herz-type space $\dot{K}^{\alpha, p}_q(\omega)$ is defined by
\[
\dot{K}^{\alpha, p}_q(\omega)=\big\{f\in L^q_\text {loc}(\mathbb R^n\setminus\{0\},\omega):\|f\|_{\dot{K}^{\alpha,p}_q(\omega)} <\infty \big \},
\]
where $\|f\|_{\dot{K}^{\alpha, p}_q(\omega)}=\Big(\sum\limits_{k=-\infty}^\infty 2^{k\alpha p} \|f\chi_k\|^p_{q,\omega}\Big)^{\frac{1}{p}}.$
\end{definition}

\begin{definition}
Let $\alpha \in \mathbb R$, $0 < p < \infty, 0 < q <\infty, \lambda \geq 0$ and $\omega$ be non-negative weighted function. The homogeneous weighted Morrey-Herz-type space $M\dot{K}_{p,q}^{\alpha,\lambda}(\omega)$ is defined by
\[
M\dot{K}^{\alpha,\lambda}_{p,q}(\omega)=\big\{f\in L^q_\text {loc}(\mathbb R^n\setminus\{0\},\omega):\|f\|_{M\dot{K}^{\alpha,\lambda}_{p,q}(\omega)} <\infty \big \},
\]
where $\|f\|_{M\dot{K}^{\alpha,\lambda}_{p,q}(\omega)}=\sup\limits_{k_0\in\mathbb Z}2^{-k_0\lambda}\Big(\sum\limits_{k=-\infty}^{k_0} 2^{k\alpha p} \|f\chi_k\|^p_{q,\omega}\Big)^{\frac{1}{p}}.$
\end{definition}
Note that $\dot{K}^{0,p}_{p}(\mathbb R^n)= L^p(\mathbb R^n)$ for $0<p<\infty$, and  $\dot{K}^{\alpha/p,p}_{p}(\mathbb R^n)= L^p(|x|^\alpha dx)$ for all $0<p<\infty$ and $\alpha\in\mathbb R$. Since $M\dot{K}^{\alpha,0}_{p,q}(\mathbb R^n)$ = $\dot{K}^{\alpha, p}_{q}(\mathbb R^n)$, it follows that the Herz spaces are the special cases of Morrey-Herz spaces. Therefore, it is said that the Herz spaces and Morrey-Herz spaces are natural generalizations of the Lebesgue spaces associated with power weights.
\vskip 5pt
Next, let us give some definitions of the two weighted Herz, Morrey, and Morrey-Herz spaces.
\begin{definition}
Let $0<p<\infty$ and $\lambda>0$. Suppose $\omega_1, \omega_2$ are  two weighted functions. Then,  
the two weighted Morrey space is defined by
\begin{align*}
\dot{B}^{p,\lambda}(\omega_1, \omega_2)=\{f\in L^p_{\rm loc}(\omega_1): \|f\|_{\dot{B}^{p,\lambda}(\omega_1, \omega_2)}<\infty \},
\end{align*}
where
\begin{align*}
\|f\|_{\dot{B}^{p,\lambda}(\omega_1, \omega_2)}=\sup\limits_{R>0}\left(\frac{1}{\omega_2(B(0,R))^{\lambda}}\int_{B(0,R)}|f(x)|^p\omega_1(x)dx \right)^{\frac{1}{p}}.
\end{align*}
\end{definition}

\begin{definition}
Let $0<p<\infty$, $0<q<\infty$, and $\alpha\in\mathbb{R}$. Let $\omega_1$ and $\omega_2$ be nonnegative weighted functions. The homogeneous two weighted Herz space $\dot{K}_q^{\alpha,p}(\omega_1,\omega_2)$ is defined to be the set of all $f\in L_{\rm loc}^q(\mathbb{R}^n\backslash\{0\};\omega_2)$ such that
\begin{align*}
\|f\|_{\dot{K}_q^{\alpha,p}(\omega_1,\omega_2)}=\left(\sum\limits_{k\in\mathbb{Z}}\omega_1(B_k)^{\frac{\alpha}{n}p}\|f\chi_k\|^p_{L^q(\mathbb{R}^n;\omega_2)} \right)^{\frac{1}{p}}<\infty.
\end{align*}
\end{definition}

\begin{definition}
Let $\alpha\in\mathbb{R}, 0<p< \infty, 0<q< \infty, \lambda\ge 0$ and $\omega_1, \omega_2$ be weighted functions. The two weighted Morrey-Herz space $M\dot{K}_{p,q}^{\alpha,\lambda}(\omega_1,\omega_2)$ is defined as the space of all functions $f\in L_{\rm loc}^q(\mathbb{R}^n\backslash\{0\};\omega_2)$ such that $\|f\|_{M\dot{K}_{p,q}^{\alpha,\lambda}(\omega_1,\omega_2)}<\infty$, where
\begin{align*}
\|f\|_{M\dot{K}_{p,q}^{\alpha,\lambda}(\omega_1,\omega_2)}=\sup\limits_{k_0\in\mathbb{Z}}\left(\omega_1(B_{k_0})^{-\frac{\lambda}{n}}\left(\sum\limits_{k=-\infty}^{k_0}\omega_1(B_k)^{\frac{\alpha}{n}p}\|f\chi_k\|^p_{q,\omega_2)} \right)^{\frac{1}{p}}\right).
\end{align*}
\end{definition}

It is obvious that for $\lambda=0$, we have $M\dot{K}_{p,q}^{\alpha,0}(\omega_1,\omega_2)=\dot{K}_{q}^{\alpha, p}(\omega_1,\omega_2)$. Also, note that if we take $\omega_1(x)=|B_0|^{-1}$,  then $M\dot{K}_{p,q}^{\alpha,\lambda}(\omega_1,\omega_2)$ reduces to  the usual one weighted Morrey-Herz space $MK_{p,q}^{\alpha,\lambda}(\omega)$. For further the applications of these spaces in analysis, the readers can refer to the monograph \cite{LYH2008}.
\begin{definition}
Let $\gamma$ be a real number. Let $\mathcal{W}_{\gamma}$ be the set of all Lebesgue measurable functions $\omega$ on $\mathbb{R}^n$ such that $\omega(x)>0$ for almost every where $x\in\mathbb{R}^n, 0<\int_{S^{n-1}}\omega(x)\sigma(x)<\infty$, and $\omega$ is absolutely homogeneous of degree $\gamma$, that is, $\omega(tx)=|t|^{\gamma}\omega(x)$ for all $t\in\mathbb{R}\backslash \{0\},\; x\in\mathbb{R}^n$.
\end{definition}
Let us denote $\mathcal{W}=\bigcup\limits_{\gamma}\mathcal{W}_{\gamma}$. It is easy to see that   $\mathcal{W}$ contains strictly the class of power weights of the form $|x|^{\gamma}$.  For further discussions, the readers can refer to \cite{CH2014} and \cite{CDH2016}.  Throughout the whole paper, we will denote by $\omega $ a weight in $\mathcal{W}_\gamma$. Next, we recall the following result related to the class of weight functions $\mathcal{W}_\gamma $, which is used in the sequel.
\begin{lemma}[\cite{CDH2016}]\label{lemma}
Let $\omega\in \mathcal{W}_\gamma$ for $\gamma>-n$.  Then, there exists a constant $C=C(\omega,n)>0$ such that
$$\omega(B_m)=C|B_m|^{\frac{\gamma+n}{n}}\;\; \text {and }\;\;
\omega(C_m)=(1-2^{-\gamma-n})\omega(B_m),
$$
for any $m\in\mathbb Z$.
\end{lemma}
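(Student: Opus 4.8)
The plan is to reduce both identities to a direct computation in polar coordinates, exploiting the homogeneity of $\omega$. First I would write, for each $m\in\mathbb Z$,
\[
\omega(B_m)=\int_{B(0,2^m)}\omega(x)\,dx=\int_0^{2^m}\!\!\int_{S^{n-1}}\omega(r\theta)\,r^{n-1}\,d\sigma(\theta)\,dr,
\]
which is legitimate by Tonelli's theorem since $\omega\ge 0$ (so no integrability of $\omega$ over $B_m$ need be assumed in advance). The absolute homogeneity of degree $\gamma$ gives $\omega(r\theta)=r^{\gamma}\omega(\theta)$ for $r>0$, so the inner integral factors and
\[
\omega(B_m)=\Big(\int_{S^{n-1}}\omega(\theta)\,d\sigma(\theta)\Big)\int_0^{2^m}r^{\gamma+n-1}\,dr.
\]
The spherical integral $c_\omega:=\int_{S^{n-1}}\omega(\theta)\,d\sigma(\theta)$ is finite and strictly positive by the definition of $\mathcal{W}_\gamma$, while the radial integral converges precisely because $\gamma+n-1>-1$, i.e. $\gamma>-n$, and equals $2^{m(\gamma+n)}/(\gamma+n)$.

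Next I would compare this with $|B_m|^{(\gamma+n)/n}$. Writing $v_n=|B(0,1)|$ we have $|B_m|=v_n2^{mn}$, hence $|B_m|^{(\gamma+n)/n}=v_n^{(\gamma+n)/n}2^{m(\gamma+n)}$. Combining the two displays yields $\omega(B_m)=C|B_m|^{(\gamma+n)/n}$ with $C=c_\omega\,v_n^{-(\gamma+n)/n}/(\gamma+n)$, which is positive and depends only on $\omega$ and $n$ (note that $\gamma$ is itself determined by $\omega$). This proves the first identity.

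For the second identity I would simply use finite additivity of the measure $\omega(x)\,dx$: since $C_m=B_m\setminus B_{m-1}$, we have $\omega(C_m)=\omega(B_m)-\omega(B_{m-1})$. By the first identity, $\omega(B_{m-1})=C\,v_n^{(\gamma+n)/n}2^{(m-1)(\gamma+n)}=2^{-(\gamma+n)}\,\omega(B_m)$, so $\omega(C_m)=(1-2^{-\gamma-n})\,\omega(B_m)$, as claimed.

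There is essentially no serious obstacle here: the whole argument is the polar-coordinate change of variables together with the scaling law for $\omega$. The only points deserving a word of care are the use of Tonelli rather than Fubini, and the observation that the hypothesis $\gamma>-n$ is exactly what makes $\int_0^{2^m}r^{\gamma+n-1}\,dr$ finite; without it $\omega(B_m)$ would be infinite and the statement would fail.
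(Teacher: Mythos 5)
Your proof is correct and is exactly the standard argument: the paper does not reprove this lemma (it is quoted from \cite{CDH2016}), and the proof there is the same polar-coordinate computation using the homogeneity $\omega(r\theta)=r^{\gamma}\omega(\theta)$, the finiteness and positivity of $\int_{S^{n-1}}\omega\,d\sigma$, and the convergence of $\int_0^{2^m}r^{\gamma+n-1}\,dr$ under $\gamma>-n$, followed by the subtraction $\omega(C_m)=\omega(B_m)-\omega(B_{m-1})$. Your remarks on Tonelli and on the role of the hypothesis $\gamma>-n$ are accurate and complete the argument.
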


\section{Main results and their proofs}
Before stating our main results, we introduce some notations which will be used throughout this section. Assume that $\Phi: \mathbb R^n\longrightarrow \mathbb C $  is a radial measurable function, that is, $\Phi(x)=\Phi(|x|)$ for all $x\in\mathbb R^n$, and $\Omega: S^{n-1}\longrightarrow \mathbb C $ is a measurable function such that $\Omega(x)\not =0$ for almost everywhere $x$ in $S^{n-1}$. Let us recall that the rough Hausdorff operator is defined by
\begin{align}
{{\mathcal H}}_{\Phi,\Omega}(f)(x)=\int_{\mathbb R^n}\dfrac{\Phi\left(x|y|^{-1}\right)}{|y|^n}\Omega\left(y|y|^{-1}\right)f(y)dy,\;\;x\in\mathbb R^n.
\end{align}
Using polar coordinates and changing variables, it is easy to see that
\begin{align}\label{eq1}
\mathcal{H}_{\Phi,\Omega}f(x)&=\int\limits_0^{+\infty}\left[\int\limits_{S^{n-1}}\frac{\Phi(t)}{t}\Omega(y')f(|x|t^{-1}y')d\sigma(y')\right]dt.
\end{align}
  For $b\in Lip^{\beta}(0<\beta\le 1$),  the commutator of Coifman-Rochberg-Weiss type of rough Hausdorff operator with the Lipschitz functions is defined as follows
\begin{align}\label{Commu}
\mathcal{H}_{\Phi,\Omega}^bf(x)&=b(x)\mathcal{H}_{\Phi,\Omega}f(x)-\mathcal{H}_{\Phi,\Omega}(bf)(x)\notag\\
&=\int\limits_0^{+\infty}\int\limits_{S^{n-1}}\frac{\Phi(t)}{t}\Omega(y')f(|x|t^{-1}y') \left[b(x)-b(|x|t^{-1}y') \right]d\sigma(y') dt,
\end{align}
where $f:\mathbb{R}^n\to \mathbb{C}$ are measurable functions.

Now, we are in a position to give the first our main results concerning the boundedness of the rough Hausdorff operator on the weighted Morrey spaces.

\begin{theorem}\label{Morrey1}
Let  $\gamma>-n, 1\le p<\infty,1+\lambda p>0, \lambda\in\mathbb{R}$ and $\Omega\in L^{p'}(S^{n-1})$.
{\rm (i)}
If  $\omega(x')\geq c>0$ for all $x' \in S^{n-1}$, and
\begin{equation}
\mathcal{C}_1=\int\limits_0^{\infty} \frac{|\Phi(t)|}{t^{1+(n+\gamma)\lambda}}dt<\infty \notag,
\end{equation}
we have ${\mathcal H}_{\Phi,\Omega}$  is a bounded operator on $\dot{B}_{\omega}^{p,\lambda}(\mathbb{R}^n)$. Moreover,
\begin{align*}
\|{\mathcal H}_{\Phi,\Omega} \|_{\dot{B}_{\omega}^{p,\lambda}(\mathbb{R}^n)\to \dot{B}_{\omega}^{p,\lambda}(\mathbb{R}^n)}\lesssim \mathcal{C}_1\|\Omega\|_{L^{p'}(S^{n-1})}.
\end{align*}
{\rm (ii)}
Conversely, suppose $\Omega\in L^{p'}(S^{n-1}, \omega(x')d\sigma(x'))$ and $\Phi$ is a real function with a constant sign in $\mathbb{R}^n$. Then, if ${\mathcal H}_{\Phi,\Omega}$ is bounded on  $\dot{B}_{\omega}^{p,\lambda}(\mathbb{R}^n)$, we have $\mathcal{C}_1<\infty$. Furthermore,
\begin{align*}
\|{\mathcal H}_{\Phi,\Omega} \|_{\dot{B}_{\omega}^{p,\lambda}(\mathbb{R}^n)\to \dot{B}_{\omega}^{p,\lambda}(\mathbb{R}^n)}\geq \mathcal{C}_1.\frac{\|\Omega\|_{L^{p'}(S^{n-1})}^{p'}}
{\|\Omega\|_{L^{p'}(S^{n-1}, \omega(x')d\sigma(x'))}^{\frac{p'}{p}}}.
\end{align*}
\end{theorem}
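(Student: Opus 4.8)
The plan is to reduce the action of $\mathcal{H}_{\Phi,\Omega}$ on a fixed ball $B(0,R)$ to a one-dimensional convolution-type operator acting on the scaling variable, and then estimate the resulting parameter integral by $\mathcal{C}_1$.

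First I would fix $f\in\dot{B}_{\omega}^{p,\lambda}(\mathbb{R}^n)$ and $R>0$, and starting from the polar form \eqref{eq1}, apply Minkowski's integral inequality to pull the $L^p_{\omega}\big(B(0,R)\big)$-norm inside the $t$-integral:
\begin{align*}
\Big(\int_{B(0,R)}|\mathcal{H}_{\Phi,\Omega}f(x)|^p\omega(x)dx\Big)^{1/p}\le \int_0^{\infty}\frac{|\Phi(t)|}{t}\Big(\int_{B(0,R)}\Big|\int_{S^{n-1}}\Omega(y')f(|x|t^{-1}y')d\sigma(y')\Big|^p\omega(x)dx\Big)^{1/p}dt.
\end{align*}
Then I would bound the inner spherical integral via Hölder with exponents $p',p$, using $\Omega\in L^{p'}(S^{n-1})$, obtaining a factor $\|\Omega\|_{L^{p'}(S^{n-1})}$ times $\big(\int_{S^{n-1}}|f(|x|t^{-1}y')|^pd\sigma(y')\big)^{1/p}$. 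Now pass to polar coordinates in $x$ for the outer integral, and use the homogeneity $\omega(x)=|x|^{\gamma}\omega(x/|x|)$; the key point here is the lower bound $\omega(x')\ge c>0$, which lets me replace the $\omega$-weighted spherical average of $f$ by the full integral $\int_{S^{n-1}}|f(rt^{-1}y')|^p\omega(y')d\sigma(y')$ up to the constant $c^{-1}$, so that after the substitution $s=rt^{-1}$ (with $r\in(0,R)$ hence $s\in(0,R/t)$) the spatial and spherical pieces recombine into $\int_{B(0,R/t)}|f(x)|^p\omega(x)dx$, with Jacobian $t^{n+\gamma}$.

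At this stage I would invoke the definition of the $\dot{B}_{\omega}^{p,\lambda}$-norm together with Lemma \ref{lemma}, namely $\omega(B(0,\rho))\simeq \rho^{n+\gamma}$, to estimate
\begin{align*}
\int_{B(0,R/t)}|f(x)|^p\omega(x)dx\le \|f\|_{\dot{B}_{\omega}^{p,\lambda}}^p\,\omega\big(B(0,R/t)\big)^{1+\lambda p}\lesssim \|f\|_{\dot{B}_{\omega}^{p,\lambda}}^p\,(R/t)^{(n+\gamma)(1+\lambda p)}.
\end{align*}
Combining everything and factoring out $\omega(B(0,R))^{1+\lambda p}\simeq R^{(n+\gamma)(1+\lambda p)}$, the powers of $R$ cancel and the remaining $t$-integral is exactly $\int_0^{\infty}\frac{|\Phi(t)|}{t}\,t^{-(n+\gamma)\lambda}\,dt=\mathcal{C}_1$ (the exponent bookkeeping: one $t^{-1}$ from $dt/t$, $t^{n+\gamma}$ from the Jacobian raised to $1/p$ gives $t^{(n+\gamma)/p}$, and $t^{-(n+\gamma)(1+\lambda p)/p}$ from the displayed bound, whose product is $t^{-1-(n+\gamma)\lambda}$). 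Taking the supremum over $R>0$ yields $\|\mathcal{H}_{\Phi,\Omega}f\|_{\dot{B}_{\omega}^{p,\lambda}}\lesssim \mathcal{C}_1\|\Omega\|_{L^{p'}(S^{n-1})}\|f\|_{\dot{B}_{\omega}^{p,\lambda}}$, which is part (i).

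For the converse (ii), I would test the boundedness on the extremal-type function $f_0(x)=|x|^{(n+\gamma)\lambda}\,\overline{\Omega(x/|x|)}^{\,p'-1}/\overline{\Omega(x/|x|)}$ adjusted so that $f_0\in\dot{B}_{\omega}^{p,\lambda}$ with computable norm (this is where $\Omega\in L^{p'}(S^{n-1},\omega\,d\sigma)$ enters, to ensure $f_0$ lies in the space, while the homogeneity degree $(n+\gamma)\lambda$ is forced by scale-invariance so that all balls give the same value). Since $\Phi$ has constant sign, $\mathcal{H}_{\Phi,\Omega}f_0(x)$ is computed explicitly: the substitution $s=|x|t^{-1}$ separates it as $|x|^{(n+\gamma)\lambda}$ times $\int_0^{\infty}\frac{|\Phi(t)|}{t}t^{-(n+\gamma)\lambda}dt$ times $\int_{S^{n-1}}|\Omega(y')|^{p'}d\sigma(y')$, i.e. a constant multiple of $\mathcal{C}_1\|\Omega\|_{L^{p'}(S^{n-1})}^{p'}f_0$ (up to the $\Omega$ phase), so $\mathcal{H}_{\Phi,\Omega}f_0=\mathcal{C}_1\|\Omega\|_{L^{p'}(S^{n-1})}^{p'}\,g$ for an explicit $g$ with $\|g\|_{\dot{B}_{\omega}^{p,\lambda}}=\|\Omega\|_{L^{p'}(S^{n-1},\omega\,d\sigma)}^{-p'/p}\|f_0\|_{\dot{B}_{\omega}^{p,\lambda}}$ after normalizing; rearranging the boundedness inequality $\|\mathcal{H}_{\Phi,\Omega}f_0\|\le \|\mathcal{H}_{\Phi,\Omega}\|\,\|f_0\|$ gives both the finiteness of $\mathcal{C}_1$ and the stated lower bound for the operator norm. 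I expect the main obstacle to be the careful bookkeeping in (ii): choosing the test function with exactly the right homogeneity and $\Omega$-dependence so that the two $\|\Omega\|$-norms appear in the displayed ratio, and justifying that $\mathcal{H}_{\Phi,\Omega}f_0$ makes sense (finiteness of the $t$-integral on the domain where $f_0\ne 0$) before one actually knows $\mathcal{C}_1<\infty$ — this is handled by first restricting to a truncated kernel $\Phi\chi_{(\varepsilon,1/\varepsilon)}$, running the argument, and letting $\varepsilon\to 0$ by monotone convergence.
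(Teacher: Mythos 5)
Your proposal follows essentially the same route as the paper: Minkowski plus H\"older on the sphere, polar coordinates combined with the lower bound $\omega(x')\ge c$, and the homogeneity $\omega(B(0,\rho))\simeq \rho^{\,n+\gamma}$ for part (i); and for part (ii) the scale-invariant test function $|x|^{(n+\gamma)\lambda}$ times an angular factor, which should be taken as $|\Omega(x')|^{p'-2}\overline{\Omega}(x')$ (rather than your $\overline{\Omega}^{\,p'-2}$, so that its product with $\Omega$ is exactly $|\Omega|^{p'}$ and its $p$-th power is $|\Omega|^{p'}$). Your exponent bookkeeping is correct, and the truncation remark for justifying the computation of $\mathcal{H}_{\Phi,\Omega}f_0$ before knowing $\mathcal{C}_1<\infty$ is a careful refinement that the paper glosses over.
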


\begin{proof}
(i) From \eqref{RoughHausdorf} and by the Minkowski inequality, we have
\begin{align*}
&\|{{\mathcal H}}_{\Phi,\Omega}f \|_{\dot{B}_{\omega}^{p,\lambda}(\mathbb{R}^n)}\\
&=\sup\limits_{R>0}\left(\frac{1}{\omega(B(0,R))^{1+\lambda p}}\int\limits_{B(0,R)}\left|\int\limits_0^{\infty}\left(\int\limits_{S^{n-1}}\frac{\Phi(t)}{t}\Omega(y')f(|x|t^{-1}y')d\sigma(y')\right)dt\right|^p\omega(x)dx \right)^{\frac{1}{p}}\\
&\le \sup\limits_{R>0}\int\limits_{0}^{\infty}\left(\int\limits_{B(0,R)}\frac{1}{\omega(B(0,R))^{1+\lambda p}}\frac{|\Phi(t)|^p}{t^p}\left|\int\limits_{S^{n-1}}\Omega(y')f(|x|t^{-1}y')d\sigma(y')\right|^p\omega(x)dx \right)^{\frac{1}{p}}dt.
\end{align*}
Using change of variable $u=xt^{-1}$, it is easy to show that
\begin{align*}
&\|{{\mathcal H}}_{\Phi,\Omega}f \|_{\dot{B}_{\omega}^{p,\lambda}(\mathbb{R}^n)}\\
&\le \sup\limits_{R>0}\int\limits_{0}^{\infty}\frac{|\Phi(t)|}{t^{1-\frac{\gamma}{p}-\frac{n}{p}}}\left(\int\limits_{B(0,t^{-1}R)}\frac{1}{\omega(B(0,R))^{1+\lambda p}}\left|\int\limits_{S^{n-1}}\Omega(y')f(|u|y')d\sigma(y')\right|^p\omega(u)du \right)^{\frac{1}{p}}dt.
\end{align*}
Note that, by the H\"{o}lder inequality, we have
\begin{align}\label{eq2}
\int\limits_{S^{n-1}}\Omega(y')f(|u|y') d\sigma(y')&\le \left(\int\limits_{S^{n-1}}|f(|u|y')|^pd\sigma(y') \right)^{\frac{1}{p}} \left(\int\limits_{S^{n-1}}|\Omega(y')|^{p'}d\sigma(y') \right)^{\frac{1}{p'}}\notag\\
&=\left(\int\limits_{S^{n-1}}|f(|u|y')|^pd\sigma(y') \right)^{\frac{1}{p}} \|\Omega \|_{L^{p'}(S^{n-1})}.
\end{align}
Thus, we obtain
\begin{align}\label{eq3}
&\|{{\mathcal H}}_{\Phi,\Omega}f \|_{\dot{B}_{\omega}^{p,\lambda}(\mathbb{R}^n)}\le \|\Omega \|_{L^{p'}(S^{n-1})}\sup\limits_{R>0}\int\limits_{0}^{\infty}\frac{|\Phi(t)|}{t^{1-\frac{\gamma}{p}-\frac{n}{p}}}\Psi(t, R) dt,
\end{align}
where $\Psi(t, R):=\left(\displaystyle\int\limits_{B(0,t^{-1}R)}\dfrac{1}{\omega(B(0,R))^{1+\lambda p}}\left(\int\limits_{S^{n-1}}|f(|u|y')|^pd\sigma(y') \right)\omega(u)du \right)^{\frac{1}{p}}$.
Now, by putting $u=rx'$ and using the condition $\omega(x')\geq c>0$ for all $x'\in S^{n-1}$,  we have
\begin{align}\label{eq4}
\Psi(t)&=\left(\frac{1}{\omega(B(0,R))^{1+\lambda p}}\int\limits_0^{t^{-1}R}\int\limits_{S^{n-1}}\left(\int\limits_{S^{n-1}}|f(|rx'|y')|^pd\sigma(y') \right)\omega(rx')d\sigma(x')r^{n-1}dr \right)^{\frac{1}{p}}\notag\\
&=\omega(S^{n-1})^{\frac{1}{p}}\left(\frac{1}{\omega(B(0,R))^{1+\lambda p}}\int\limits_{0}^{t^{-1}R}r^{\gamma+n-1} \left(\int\limits_{S^{n-1}}|f(ry')|^pd\sigma(y') \right)dr \right)^{\frac{1}{p}}\notag\\
&\lesssim \omega(S^{n-1})^{\frac{1}{p}}\left(\frac{1}{\omega(B(0,R))^{1+\lambda p}}\int\limits_{0}^{t^{-1}R}r^{\gamma+n-1} \left(\int\limits_{S^{n-1}}|f(ry')|^p \omega(y')d\sigma(y') \right)dr \right)^{\frac{1}{p}}\notag\\
&\lesssim \omega(S^{n-1})^{\frac{1}{p}}\left(\frac{1}{\omega(B(0,R))^{1+\lambda p}}\int\limits_{B(0,t^{-1}R)} |f(x)|^p\omega(x)dx  \right)^{\frac{1}{p}}.
\end{align}
We have
\begin{align*}
\omega(B(0,{t^{-1}R}))&=\int\limits_{B(0,{t^{-1}R})}\omega(z)dz\\
&=\int\limits_{B(0,R)}t^{-(\gamma+ n)}\omega(y) dy=t^{-(\gamma+ n)}\omega(B(0,R)),
\end{align*}
so
\begin{align}\label{eq5}
\frac{1}{\omega(B(0,R))^{1+\lambda p}}=\frac{1}{t^{(\gamma+ n)(1+\lambda p)}\omega(B(0,{t^{-1}R}))^{1+\lambda p}}.
\end{align}
Hence, from \eqref{eq3},  \eqref{eq4} and \eqref{eq5}, we obtain
\begin{align*}
\|{{\mathcal H}}_{\Phi,\Omega}f \|_{\dot{B}_{\omega}^{p,\lambda}(\mathbb{R}^n)} &\lesssim \|\Omega \|_{L^{p'}(S^{n-1})}\|f\|_{\dot{B}^{p,\lambda}_{\omega}(\mathbb{R}^n)}\int\limits_{0}^{\infty}\frac{|\Phi(t)|}{t^{1+(n+\gamma)\lambda}}dt\\
&\lesssim \mathcal{C}_1 \|\Omega \|_{L^{p'}(S^{n-1})}\|f\|_{\dot{B}^{p,\lambda}_{\omega}(\mathbb{R}^n)},
\end{align*}
which completes the proof of the part (i).

(ii) Conversely, suppose ${\mathcal H}_{\Phi,\Omega}$ is bounded on the  space $\dot{B}_{\omega}^{p,\lambda}(\mathbb{R}^n)$. As it is known, a standard approach to proving for the part (ii) of the theorem is to take  a appropriately radial function.
Here, let us also choose the function as follows
\begin{align*}
f(x)=|x|^{(n+\gamma)\lambda} |\Omega(x')|^{p'-2}\,\overline{\Omega}(x'), \text { for } x'=\frac{x}{|x|}.
\end{align*}
We then have
\begin{align*}
\|f\|_{\dot{B}^{p,\lambda}_{\omega}(\mathbb{R}^n)}&=\sup\limits_{R>0}\left(\frac{1}{\omega(B(0,R))^{1+\lambda p}}\int\limits_{B(0,R)}|x|^{(n+\gamma)\lambda p}|\Omega(x)'|^{(p'-2)p}|\Omega(x')|^p\omega(x)dx \right)^{\frac{1}{p}}\\
&=\sup\limits_{R>0}\left(\frac{1}{\omega(B(0,R))^{1+\lambda p}}\int\limits_{B(0,R)}|x|^{(n+\gamma)\lambda p}|\Omega(x)'|^{p'}\omega(x)dx \right)^{\frac{1}{p}}.\\
\end{align*}
Since $\gamma>-n$, a simple computation shows that
\begin{align*}
\omega(B(0,R))=\frac{R^{n+\gamma}}{n+\gamma}\omega(S^{n-1}),
\end{align*}
and we get
\begin{align*}
\int\limits_{B(0,R)}|x|^{(n+\gamma)\lambda p}|\Omega(x')|^{p'}\omega(x)dx&=\int\limits_{0}^R\int\limits_{S^{n-1}}|rx'|^{(n+\gamma)\lambda p}|\Omega(x')|^{p'}\omega(rx')r^{n-1}d\sigma(x')dr\\
&=\left(\int\limits_{0}^Rr^{(n+\gamma)(1+\lambda p)-1}dr\right)\left(\int\limits_{S^{n-1}}|\Omega(x')|^{p'}\omega(x')d\sigma(x')\right)\\
&=\frac{R^{(n+\gamma)(1+\lambda p)}}{(n+\gamma)(1+\lambda p)}\|\Omega\|^{p'}_{L^{p'}(S^{n-1}, \omega(x')d\sigma(x'))}. 
\end{align*}
Consequently,  
\begin{align*}
\|f\|_{\dot{B}^{p,\lambda}_{\omega}(\mathbb{R}^n)}=\left(\frac{n+\gamma}{\omega(S^{n-1})} \right)^{\lambda}\frac{1}{(1+\lambda p)^{\frac{1}{p}}}\frac{1}{\omega(S^{n-1})^{\frac{1}{p}}}\|\Omega\|_{L^{p'}(S^{n-1}, \omega(x')d\sigma(x'))}^{\frac{p'}{p}}<\infty.
\end{align*}
On the other hand, by choosing $f$ as above, we get
\begin{align*}
{{\mathcal H}}_{\Phi,\Omega}f(x)
&=\int\limits_0^{\infty}\left(\int\limits_{S^{n-1}}\frac{\Phi(t)}{t}\Omega(y')f(|x|t^{-1}y') d\sigma(y')\right)dt\\
&=\int\limits_0^{\infty}\left(\int\limits_{S^{n-1}}\frac{\Phi(t)}{t}\Omega(y')\left| |x|t^{-1}y'\right|^{(n+\gamma)\lambda}|\Omega(y')|^{p'-2}\overline{\Omega}(y') d\sigma(y')\right)dt\\
&=|x|^{(n+\gamma)\lambda}.\int\limits_0^{\infty}\frac{\Phi(t)}{t}t^{-(n+\gamma)\lambda}\left(\int\limits_{S^{n-1}}|\Omega(y')|^{p'} d\sigma(y')\right)dt\\
&=\|\Omega\|_{L^{p'}(S^{n-1})}^{p'}.|x|^{(n+\gamma)\lambda}.\int\limits_0^{\infty}\frac{\Phi(t)}{t^{1+(n+\gamma)\lambda}}dt.
\end{align*}
Hence, it follows that
\begin{align*}
\|{{\mathcal H}}_{\Phi,\Omega}f \|_{\dot{B}_{\omega}^{p,\lambda}(\mathbb R^n)}\simeq \|\Omega\|_{L^{p'}(S^{n-1})}^{p'}.\||x|^{(n+\gamma)\lambda}\|_{\dot{B}_{\omega}^{p,\lambda}(\mathbb R^n)}.
\int\limits_0^{\infty}\frac{|\Phi(t)|}{t^{1+(n+\gamma)\lambda}}dt.
\end{align*}
Because $\|f\|_{\dot{B}^{p,\lambda}_{\omega}(\mathbb{R}^n)}\simeq \||x|^{(n+\gamma)\lambda}\|_{\dot{B}_{\omega}^{p,\lambda}(\mathbb R^n)}.\|\Omega\|_{L^{p'}(S^{n-1}, \omega(x')d\sigma(x'))}^{\frac{p'}{p}}$, 
we have
\begin{align*}
\|{{\mathcal H}}_{\Phi,\Omega} \|_{\dot{B}_{\omega}^{p,\lambda}(\mathbb R^n)\to \dot{B}_{\omega}^{p,\lambda}(\mathbb R^n)}&\ge \frac{\|{{\mathcal H}}_{\Phi,\Omega}f \|_{\dot{B}_{\omega}^{p,\lambda}(\mathbb R^n)}}{\|f\|_{\dot{B}_{\omega}^{p,\lambda}(\mathbb R^n)}}\\
&\gtrsim \frac{\|\Omega\|_{L^{p'}(S^{n-1})}^{p'}}
{\|\Omega\|_{L^{p'}(S^{n-1}, \omega(x')d\sigma(x'))}^{\frac{p'}{p}}}.
 \int\limits_0^{\infty}\frac{|\Phi(t)|}{t^{1+(n+\gamma)\lambda}}dt,
\end{align*}
which finishes the proof of the theorem.
\end{proof}

Remark that if $\omega(x)=|x|^\gamma$, we have $\omega(x')=1$ for all $x'\in S^{n-1}$. We then get
$$
\frac{\|\Omega\|_{L^{p'}(S^{n-1})}^{p'}}
{\|\Omega\|_{L^{p'}(S^{n-1}, \omega(x')d\sigma(x'))}^{\frac{p'}{p}}}
=\|\Omega\|_{L^{p'}(S^{n-1})}.
$$
Therefore, from Theorem \ref{Morrey1}, we have immediately the following corollary.
\begin{corollary}
Let  $\gamma>-n, 1\le p<\infty,1+\lambda p>0, \lambda\in\mathbb{R}$. Suppose  $\Omega\in L^{p'}(S^{n-1})$, $\omega(x)=|x|^\gamma$ for $\gamma>-n$, and $\Phi$ is a nonnegative radial function. Then, ${\mathcal H}_{\Phi,\Omega}$  is a bounded operator on $\dot{B}_{\omega}^{p,\lambda}(\mathbb{R}^n)$ if and only if 
$$\mathcal{C}_{1.1}=\int\limits_0^{\infty} \frac{\Phi(t)}{t^{1+(n+\gamma)\lambda}}dt<\infty.$$
 Moreover,
\begin{align*}
\|{{\mathcal H}}_{\Phi,\Omega} \|_{\dot{B}_{\omega}^{p,\lambda}(\mathbb R^n)\to \dot{B}_{\omega}^{p,\lambda}(\mathbb R^n)}\simeq\mathcal{C}_{1.1}.\|\Omega\|_{L^{p'}(S^{n-1})}.
\end{align*}
\end{corollary}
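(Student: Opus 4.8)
The plan is to obtain the corollary as a direct specialization of Theorem \ref{Morrey1} to the power weight $\omega(x)=|x|^{\gamma}$. For this weight the angular trace is constant, $\omega(x')=1$ for every $x'\in S^{n-1}$, so in particular the hypothesis $\omega(x')\ge c>0$ of part (i) holds with $c=1$; moreover $L^{p'}(S^{n-1},\omega(x')d\sigma(x'))=L^{p'}(S^{n-1})$ with equal norms, and since $\Omega\in L^{p'}(S^{n-1})$ by assumption, all the hypotheses appearing in both parts of Theorem \ref{Morrey1} are satisfied.

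First I would record that, $\Phi$ being nonnegative, $|\Phi(t)|=\Phi(t)$, hence $\mathcal{C}_1=\mathcal{C}_{1.1}$. If $\mathcal{C}_{1.1}<\infty$, then part (i) of Theorem \ref{Morrey1} (with $c=1$) gives the boundedness of $\mathcal{H}_{\Phi,\Omega}$ on $\dot{B}_{\omega}^{p,\lambda}(\mathbb{R}^n)$ together with
\[
\|\mathcal{H}_{\Phi,\Omega}\|_{\dot{B}_{\omega}^{p,\lambda}(\mathbb{R}^n)\to\dot{B}_{\omega}^{p,\lambda}(\mathbb{R}^n)}\lesssim \mathcal{C}_{1.1}\,\|\Omega\|_{L^{p'}(S^{n-1})}.
\]
Conversely, if $\mathcal{H}_{\Phi,\Omega}$ is bounded on $\dot{B}_{\omega}^{p,\lambda}(\mathbb{R}^n)$, then since a nonnegative radial $\Phi$ is in particular a real function of constant sign, part (ii) applies and yields $\mathcal{C}_1=\mathcal{C}_{1.1}<\infty$ together with
\[
\|\mathcal{H}_{\Phi,\Omega}\|_{\dot{B}_{\omega}^{p,\lambda}(\mathbb{R}^n)\to\dot{B}_{\omega}^{p,\lambda}(\mathbb{R}^n)}\ge \mathcal{C}_{1}\cdot\frac{\|\Omega\|_{L^{p'}(S^{n-1})}^{p'}}{\|\Omega\|_{L^{p'}(S^{n-1},\omega(x')d\sigma(x'))}^{p'/p}}=\mathcal{C}_{1.1}\,\|\Omega\|_{L^{p'}(S^{n-1})},
\]
where the final equality is exactly the simplification carried out in the Remark preceding the corollary, using $\omega(x')\equiv 1$ and $\tfrac1p+\tfrac1{p'}=1$, so that $p'(1-1/p)=1$. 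Combining the two displays proves both that $\mathcal{H}_{\Phi,\Omega}$ is bounded on $\dot{B}_{\omega}^{p,\lambda}(\mathbb{R}^n)$ if and only if $\mathcal{C}_{1.1}<\infty$, and the norm equivalence $\|\mathcal{H}_{\Phi,\Omega}\|_{\dot{B}_{\omega}^{p,\lambda}(\mathbb{R}^n)\to\dot{B}_{\omega}^{p,\lambda}(\mathbb{R}^n)}\simeq \mathcal{C}_{1.1}\,\|\Omega\|_{L^{p'}(S^{n-1})}$.

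There is no real obstacle here; it is a bookkeeping exercise once Theorem \ref{Morrey1} is in hand. The only points that deserve a line of verification are that the hypothesis $\omega(x')\ge c>0$ of part (i) is met (take $c=1$), that the hypothesis that $\Phi$ has constant sign in part (ii) is met (it is nonnegative), and that $|\Phi|=\Phi$ identifies the two constants $\mathcal{C}_1$ and $\mathcal{C}_{1.1}$, so that the two halves of Theorem \ref{Morrey1} may legitimately be invoked and their conclusions combined.
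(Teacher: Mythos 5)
Your proposal is correct and follows exactly the route the paper takes: the corollary is obtained by specializing Theorem \ref{Morrey1} to $\omega(x)=|x|^{\gamma}$, using the remark that $\omega(x')\equiv 1$ on $S^{n-1}$ so that the lower-bound ratio collapses to $\|\Omega\|_{L^{p'}(S^{n-1})}$ via $p'-p'/p=1$, and identifying $\mathcal{C}_1$ with $\mathcal{C}_{1.1}$ since $\Phi\ge 0$. Nothing further is needed.
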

%%%%%%%%%%%%%%%%%%%%%%%%%%%%%%%%%%%%%%%%%%%%%%%%%%%%%%
%%%%%%%%%%%%%%%%%%%%%%%%%%%%%%%%%%%%%%%%%%%%%%%%%%%%%%
%%%%%%%%%%%%%%%%%%%%%%%%%%%%%%%%%%%%%%%%%%%%%%%%%%%%%%
Next, we also give the boundedness and bound of the rough Hausdorff operator on the weighted Herz spaces.
\begin{theorem}\label{Herz}
Let $1\leq p, q <\infty$ and $\Omega\in L^{q'}(S^{n-1})$. \\
{\rm (i)} If $\omega(x')\ge c>0$ for all $x'\in S^{n-1}$ and 
\begin{align*}
\mathcal{C}_2= \int_{0}^\infty|\Phi(t^{-1})|\;t^{1-2n-\frac{\gamma}{q}-\frac{n}{q}}dt <  \infty,
\end{align*}
we have ${\mathcal H}_{\Phi,\Omega}$ is a bounded operator on $\dot{K}^{\alpha,p}_{q}(\omega)$. Moreover,
\begin{align*}
\|{{\mathcal H}}_{\Phi,\Omega}\|_{\dot{K}^{\alpha,p}_{q}(\omega)\to \dot{K}^{\alpha,p}_{q}(\omega)}\lesssim \mathcal{C}_2\|\Omega\|_{L^{q'}(S^{n-1})}.
\end{align*}
{\rm (ii)} Conversely, suppose $\Omega\in L^{q'}(S^{n-1}, \omega(x')d\sigma(x'))$ and $\Phi$ is a real function with a constant sign in $\mathbb{R}^n$. Then, if ${\mathcal H}_{\Phi,\Omega}$ is bounded on the space $\dot{K}^{\alpha,p}_{q}(\omega)$, we have $\mathcal{C}_2<\infty$. Furthermore, 
\[
\|{{\mathcal H}}_{\Phi,\Omega}\|_{\dot{K}^{\alpha,p}_{q}(\omega)\to \dot{K}^{\alpha,p}_{q}(\omega)}\ge \mathcal C_2.\frac{\|\Omega \|^{q'}_{L^{q'}(S^{n-1})}}{\|\Omega \|^{\frac{q'}{q}}_{L^{q'}(S^{n-1},\omega(x') d\sigma(x'))}}.
\]
\end{theorem}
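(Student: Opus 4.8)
The plan is to run the argument in parallel with the proof of Theorem~\ref{Morrey1}, the only genuinely new feature being the discrete sum over the annuli $C_k$. For part~(i), I would start from the representation \eqref{eq1}, apply Minkowski's integral inequality in $L^q(C_k,\omega\,dx)$ to move the $t$-integral outside the norm, and then Hölder's inequality in $y'\in S^{n-1}$ with exponents $q,q'$ together with the hypothesis $\Omega\in L^{q'}(S^{n-1})$, which extracts the factor $\|\Omega\|_{L^{q'}(S^{n-1})}$ and reduces matters to estimating $\big(\int_{C_k}\int_{S^{n-1}}|f(|x|t^{-1}y')|^q\,d\sigma(y')\,\omega(x)\,dx\big)^{1/q}$. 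Passing to polar coordinates $x=rx'$, using the homogeneity $\omega(rx')=r^{\gamma}\omega(x')$ with $0<\int_{S^{n-1}}\omega<\infty$, then the hypothesis $\omega(x')\ge c>0$ to replace $d\sigma(y')$ by $c^{-1}\omega(y')\,d\sigma(y')$, and finally the substitution $s=rt^{-1}$, this quantity becomes a constant multiple of $t^{(\gamma+n)/q}\big(\int_{\{2^{k-1}t^{-1}<|z|\le 2^{k}t^{-1}\}}|f(z)|^q\omega(z)\,dz\big)^{1/q}$ --- the same computation as in \eqref{eq4}--\eqref{eq5}, with the rescaled ball replaced by a rescaled annulus.

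The step with no analogue in the Morrey case is the bookkeeping of the dyadic indices. Writing $l(t):=\lfloor\log_2 t\rfloor$, so that $t\simeq 2^{l(t)}$, one has $\{2^{k-1}t^{-1}<|z|\le 2^{k}t^{-1}\}\subset C_{k-l(t)-1}\cup C_{k-l(t)}$, hence the last integral is bounded by $\|f\chi_{k-l(t)-1}\|_{q,\omega}^q+\|f\chi_{k-l(t)}\|_{q,\omega}^q$. Multiplying through by $2^{k\alpha}$, taking the $\ell^p(\mathbb Z)$-norm in $k$, and applying Minkowski's inequality in $\ell^p$ a second time to pull the $t$-integral back out, the residual $k$-dependence is $\big(\sum_{k}2^{k\alpha p}\|f\chi_{k-l(t)}\|^p_{q,\omega}\big)^{1/p}$ (and the twin term), which under the index shift $k\mapsto k+l(t)$ equals $2^{l(t)\alpha}\|f\|_{\dot K^{\alpha,p}_q(\omega)}$. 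Using $2^{l(t)\alpha}\simeq t^{\alpha}$ and collecting the powers $t^{-1}\cdot t^{(\gamma+n)/q}\cdot t^{\alpha}$, the estimate reduces to $\|\mathcal H_{\Phi,\Omega}f\|_{\dot K^{\alpha,p}_q(\omega)}\lesssim\|\Omega\|_{L^{q'}(S^{n-1})}\|f\|_{\dot K^{\alpha,p}_q(\omega)}\int_0^{\infty}|\Phi(t)|t^{\alpha-1+(\gamma+n)/q}\,dt$, and the last integral is $\mathcal C_2$ after the change of variables $t\mapsto t^{-1}$; this is part~(i) with the claimed bound on the operator norm.

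For part~(ii) one cannot merely imitate Theorem~\ref{Morrey1}(ii), because no nonzero homogeneous power $|x|^{\delta}$ belongs to $\dot K^{\alpha,p}_q(\omega)$ (its defining series is a two-sided geometric series). I would instead test against the truncated function $f_N(x)=|x|^{\delta}|\Omega(x')|^{q'-2}\overline{\Omega}(x')\chi_{\{2^{-N}<|x|\le 2^{N}\}}(x)$, choosing $\delta$ so that the dyadic pieces of $f_N$ contribute equally, i.e.\ $\delta q+\gamma+n+\alpha q=0$; a computation like the one in Theorem~\ref{Morrey1}(ii), using $\omega(rx')=r^{\gamma}\omega(x')$, then gives $\|f_N\|_{\dot K^{\alpha,p}_q(\omega)}\simeq N^{1/p}\|\Omega\|^{q'/q}_{L^{q'}(S^{n-1},\omega(x')d\sigma(x'))}$, while the same manipulation as there yields $\mathcal H_{\Phi,\Omega}f_N(x)=\|\Omega\|^{q'}_{L^{q'}(S^{n-1})}|x|^{\delta}\int_{|x|2^{-N}}^{|x|2^{N}}\Phi(t)t^{-\delta-1}\,dt$. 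For $|x|\in C_k$ with $-N/2<k\le N/2$ the interval $[\,|x|2^{-N},|x|2^{N}\,]$ contains $[2^{-N/2},2^{N/2}]$, and since $\Phi$ has a constant sign and $t^{-\delta-1}>0$, the modulus of the inner integral is at least $I_N:=\int_{2^{-N/2}}^{2^{N/2}}|\Phi(t)|t^{-\delta-1}\,dt$; summing $2^{k\alpha p}\|\mathcal H_{\Phi,\Omega}f_N\chi_k\|^p_{q,\omega}$ over this range of $k$ gives $\|\mathcal H_{\Phi,\Omega}f_N\|_{\dot K^{\alpha,p}_q(\omega)}\gtrsim N^{1/p}\|\Omega\|^{q'}_{L^{q'}(S^{n-1})}I_N$. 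Feeding this into the boundedness hypothesis, cancelling $N^{1/p}$, and letting $N\to\infty$ (so $I_N\uparrow\int_0^{\infty}|\Phi(t)|t^{-\delta-1}\,dt=\mathcal C_2$ by monotone convergence, $-\delta-1$ being exactly the exponent in $\mathcal C_2$ after $t\mapsto t^{-1}$) forces $\mathcal C_2<\infty$ and produces $\|\mathcal H_{\Phi,\Omega}\|_{\dot K^{\alpha,p}_q(\omega)\to\dot K^{\alpha,p}_q(\omega)}\gtrsim\mathcal C_2\,\|\Omega\|^{q'}_{L^{q'}(S^{n-1})}\big/\|\Omega\|^{q'/q}_{L^{q'}(S^{n-1},\omega(x')d\sigma(x'))}$; tracking the normalizing constants of $f_N$ through the computation upgrades $\gtrsim$ to the stated $\ge$.

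The routine ingredients --- Minkowski, Hölder, polar coordinates, the homogeneity of $\omega$ --- are exactly those already used in Theorem~\ref{Morrey1}; the only mildly new technical points in part~(i) are the covering $\{2^{k-1}t^{-1}<|z|\le 2^{k}t^{-1}\}\subset C_{k-l(t)-1}\cup C_{k-l(t)}$ and the second application of Minkowski in $\ell^p$ with the index shift. I expect the real obstacle to be part~(ii): the truncation $f_N$ must be engineered so that simultaneously $f_N$ stays in $\dot K^{\alpha,p}_q(\omega)$ with norm of exact order $N^{1/p}$, and, on a set carrying a comparable share of the Herz norm of $\mathcal H_{\Phi,\Omega}f_N$, the truncated integral $\int_{|x|2^{-N}}^{|x|2^{N}}\Phi\,t^{-\delta-1}$ is bounded below by a fixed positive fraction of the full $\mathcal C_2$. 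Matching these two requirements is precisely what pins down the exponent $\delta$ and the admissible range of $k$, and is the point at which the constant-sign hypothesis on $\Phi$ and the finiteness $\Omega\in L^{q'}(S^{n-1},\omega(x')d\sigma(x'))$ become indispensable.
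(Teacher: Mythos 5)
Your part (i) follows the paper's own route almost step for step (Minkowski, H\"older on the sphere against $\Omega$, polar coordinates with the homogeneity $\omega(rx')=r^{\gamma}\omega(x')$ and the lower bound $\omega(x')\ge c$, the covering of the rescaled annulus by two consecutive $C_j$'s, and the index shift producing the factor $t^{\alpha}$); the only difference is that the paper first substitutes $u=t^{-1}$ and works with $\Phi(u^{-1})$ throughout, which is cosmetic. One caveat: your final integral $\int_0^\infty|\Phi(t)|t^{\alpha-1+(\gamma+n)/q}\,dt$ does \emph{not} turn into the printed $\mathcal C_2=\int_0^\infty|\Phi(t^{-1})|t^{1-2n-\gamma/q-n/q}\,dt$ under $t\mapsto t^{-1}$ (the substitution gives exponent $-1-\alpha-(\gamma+n)/q$, not $1-2n-(\gamma+n)/q$), so you should not assert that identification. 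This is a defect of the theorem statement rather than of your argument: the paper's own proof of (i) also terminates with the $\alpha$-dependent integral $\int_0^\infty|\Phi(u^{-1})|u^{1-2n-\gamma/q-n/q-\alpha}\,du$ and then silently calls it $\mathcal C_2$, and its part (ii) lower bound drops the $\alpha$ as well; your upper and lower constants at least agree with \emph{each other}, which is the internally consistent version of the result.

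Part (ii) is where you genuinely diverge. The paper keeps a one-sided test function $f_m(x)=|x|^{-\alpha-\gamma/q-n/q-2^{-m}}|\Omega(x')|^{q'-2}\overline\Omega(x')\chi_{\{|x|\ge1\}}$, perturbing the critical exponent by $2^{-m}$ so that the Herz series converges, bounding $\mathcal H_{\Phi,\Omega}f_m$ from below on a truncated $t$-range $S_m$, cancelling the common factor $\bigl|\tfrac{2^{q(2^{-m}+\alpha)}-1}{(2^{-m}+\alpha)q}\bigr|^{1/q}\bigl(\sum_k 2^{-kp/2^m}\bigr)^{1/p}$ between numerator and denominator, and letting $m\to\infty$ by monotone/dominated convergence. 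You instead truncate two-sidedly at the \emph{exact} critical exponent $\delta=-\alpha-(\gamma+n)/q$, so that each dyadic block contributes equally, normalize by $N^{1/p}$, and get the lower bound from the middle annuli $|k|\le N/2$ where the truncated $t$-integral dominates $\int_{2^{-N/2}}^{2^{N/2}}$. Both schemes are correct and use the constant-sign hypothesis and $\Omega\in L^{q'}(S^{n-1},\omega\,d\sigma)$ in the same essential places; yours avoids the $\epsilon$-perturbation of the exponent at the cost of the $N^{1/p}$ bookkeeping, while the paper's avoids the two-sided truncation at the cost of the limit in $m$. The one place where you are (like the paper) a little optimistic is the claim that tracking constants upgrades $\gtrsim$ to the stated $\ge$: the normalizing constants of $f_N$ and of $\|\,|x|^{\delta}\chi_k\|_{q,\omega}$ involve $\omega(S^{n-1})^{1/q}$ versus $\|\Omega\|^{q'/q}_{L^{q'}(\omega\,d\sigma)}$ and do not cancel identically, so an honest conclusion is the two-sided equivalence of the operator norm with your integral constant, up to absolute constants --- which is also all the paper's own computation actually delivers.
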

\begin{proof}
(i) For every $k\in\mathbb{Z}$, by changing of variable $u=t^{-1}$, we have
\begin{align*}
\|{{\mathcal H}}_{\Phi,\Omega}f\chi_k \|_{q,\omega}&=\left(\int\limits_{\mathbb{R}^n}\vert\mathcal{H}_{\Phi,\Omega}f(x)\chi_k(x) \vert^q\omega(x)dx \right)^{\frac{1}{q}}\\
&=\left(\int\limits_{C_k}\left|\int\limits_0^{\infty}\left(\int\limits_{S^{n-1}}\Phi(u^{-1})u^{1-2n}\Omega(y')f(|x|uy')d\sigma(y')\right)du \right|^q\omega(x)dx \right)^{\frac{1}{q}}.
\end{align*}
By Minkowski's inequality and changing of variable $v=ux$, we obtain
\begin{align}\label{eq6}
\|{{\mathcal H}}_{\Phi,\Omega}f\chi_k \|_{q,\omega}&\le \int\limits_0^{\infty}|\Phi(u^{-1})|u^{1-2n-\frac{\gamma}{q}-\frac{n}{q}}\left(\int\limits_{uC_k}\left|\int\limits_{S^{n-1}}\Omega(y')f(|v|y')d\sigma(y') \right|^q\omega(v)dv \right)^{\frac{1}{q}}du.
\end{align}
On the other hand, by the H\"{o}lder inequality, we have the following estimate
\begin{align}\label{eq6a1}
\int\limits_{S^{n-1}}\Omega(y')f(|u|y') d\sigma(y')&\le \left(\int\limits_{S^{n-1}}|f(|u|y')|^pd\sigma(y') \right)^{\frac{1}{q}} \left(\int\limits_{S^{n-1}}|\Omega(y')|^{q'}d\sigma(y') \right)^{\frac{1}{q'}}\notag\\
&=\left(\int\limits_{S^{n-1}}|f(|u|y')|^pd\sigma(y') \right)^{\frac{1}{q}} \|\Omega \|_{L^{q'}(S^{n-1})}.
\end{align}
Therefore, by combining \eqref{eq6} and \eqref{eq6a1}, one has
\begin{align*}
&\|{{\mathcal H}}_{\Phi,\Omega}f\chi_k \|_{q,\omega}\notag\\
&\le\int\limits_0^{\infty}|\Phi(u^{-1})|u^{1-2n-\frac{\gamma}{q}-\frac{n}{q}}\left(\int\limits_{uC_k}\left|\left(\int\limits_{S^{n-1}}|f(|v|y')|^qd\sigma(y') \right)^{\frac{1}{q}} \|\Omega \|_{L^{q'}(S^{n-1})} \right|^q\omega(v)dv \right)^{\frac{1}{q}}du\\
&= \|\Omega \|_{L^{q'}(S^{n-1})} \int\limits_0^{\infty}|\Phi(u^{-1})|u^{1-2n-\frac{\gamma}{q}-\frac{n}{q}}\mathcal{J}(u)^{\frac{1}{q}}du,
\end{align*}
where $\mathcal{J}(u):=\int\limits_{uC_k}\left(\int\limits_{S^{n-1}}|f(|v|y')|^qd\sigma(y') \right)\omega(v)dv $. By changing of variable $v=rx'$ and using $\omega(x')\geq c>0$ for all $x'\in S^{n-1}$, we get
\begin{align*}
\mathcal{J}(u)&= \int\limits_{uC_k}\int\limits_{S^{n-1}} \left(\int\limits_{S^{n-1}}|f(|rx'|y')|^qd\sigma(y') \right)d\sigma(x') \omega(rx') r^{n-1}dr\\
&=\omega(S^{n-1})\int\limits_{uC_k}r^{n-1+\gamma} \left(\int\limits_{S^{n-1}}|f(ry')|^qd\sigma(y') \right)dr\\
&\lesssim\int\limits_{uC_k}r^{n-1+\gamma} \left(\int\limits_{S^{n-1}}|f(ry')|^q\omega(y') d\sigma(y') \right)dr =\|f\chi_{uC_k} \|_{q,\omega}^q.
\end{align*}
Thus, we obtain
\begin{align*}
\|{{\mathcal H}}_{\Phi,\Omega}f\chi_k \|_{q,\omega}\lesssim \|\Omega \|_{L^{q'}(S^{n-1})} \int\limits_0^{\infty}|\Phi(u^{-1})|u^{1-2n-\frac{\gamma}{q}-\frac{n}{q}}\|f\chi_{uC_k} \|_{q,\omega} du.
\end{align*}
Noting that for each $u\in (0, \infty)$, one can find an integer number $\ell=\ell(u)$ such that $2^{\ell-1}<u\le 2^{\ell}$. This implies that $uC_k$ is a subset of $C_{k+\ell-1}\cup C_{k+\ell}$. Thus, we obtain 
\begin{align*}
\|f\chi_{uC_k}\|_{q,\omega}\le \|f\chi_{k+\ell-1} \|_{q,\omega}+\|f\chi_{k+\ell} \|_{q,\omega}.
\end{align*}
So, one has
\begin{align}\label{eq7}
\|{{\mathcal H}}_{\Phi,\Omega}f\chi_k \|_{q,\omega}\lesssim \|\Omega \|_{L^{q'}(S^{n-1})} \int\limits_0^{\infty}|\Phi(u^{-1})|\;u^{1-2n-\frac{\gamma}{q}-\frac{n}{q}}\left( \|f\chi_{k+\ell-1} \|_{q,\omega}+\|f\chi_{k+\ell} \|_{q,\omega} \right) dt.
\end{align}
On the other hand, by $1\le p<\infty$, we have
\begin{align*}
&\|{{\mathcal H}}_{\Phi,\Omega}f \|_{\dot{K}^{\alpha,p}_{q}(\omega)}=\left(\sum\limits_{k=-\infty}^{\infty}2^{k\alpha p}\|\mathcal{H}_{\Phi,\Omega}f(x)\chi_k(x) \|_{q,\omega}^p \right)^{\frac{1}{p}}\\
&\lesssim \|\Omega \|_{L^{q'}(S^{n-1})} \int\limits_0^{\infty}|\Phi(u^{-1})|\;u^{1-2n-\frac{\gamma}{q}-\frac{n}{q}}\left(\sum\limits_{k=-\infty}^{\infty}2^{k\alpha p} \left(\|f\chi_{k+\ell-1} \|_{q,\omega}+\|f\chi_{k+\ell} \|_{q,\omega}   \right)^p \right)^{\frac{1}{p}}du.
\end{align*}
Since $2^{\ell-1}<u\le 2^{\ell}$, it follows that
\begin{align*}
&\left(\sum\limits_{k=-\infty}^{\infty}2^{k\alpha p} \left(  \|f\chi_{k+\ell-1} \|_{q,\omega}+\|f\chi_{k+\ell} \|_{q,\omega}  \right)^p \right)^{\frac{1}{p}}\\
&\le \left(\sum\limits_{k=-\infty}^{\infty}2^{k\alpha p}  \|f\chi_{k+\ell-1} \|_{q,\omega}^p \right)^{\frac{1}{p}} +\left(\sum\limits_{k=-\infty}^{\infty}2^{k\alpha p}  \|f\chi_{k+\ell} \|_{q,\omega}^p \right)^{\frac{1}{p}}\\
&\le (2^{-(\ell-1)\alpha}+2^{-\ell\alpha})\|f\|_{\dot{K}^{\alpha,p}_{q}(\omega)}\lesssim u^{-\alpha}\|f\|_{\dot{K}^{\alpha,p}_{q}(\omega)}.
\end{align*}
Consequently, 
\begin{align*}
\|{{\mathcal H}}_{\Phi,\Omega}f \|_{\dot{K}^{\alpha,p}_{q}(\omega)} \lesssim\|\Omega \|_{L^{q'}(S^{n-1})} . \|f\|_{\dot{K}^{\alpha,p}_{q}(\omega)}.\int\limits_0^{\infty}|\Phi(u^{-1})|u^{1-2n-\frac{\gamma}{q}-\frac{n}{q}-\alpha}du.
\end{align*}
This shows that the operator ${{\mathcal H}}_{\Phi,\Omega}$ is boundedness on the space $\dot{K}^{\alpha,p}_{q}(\omega)$ and $\|{{\mathcal H}}_{\Phi,\Omega}\|_{\dot{K}^{\alpha,p}_{q}(\omega)\to \dot{K}^{\alpha,p}_{q}(\omega)}\lesssim \mathcal{C}_2\|\Omega\|_{L^{q'}(S^{n-1})}$.

(ii) Now, we will give the proof for part (ii) of the theorem.
For $m\in\mathbb{Z}$, we choose $m$ sufficiently large  such that $\alpha+\frac{1}{2^m}\ne 0$.  Let us choose the functions
\begin{align*}
f_{m}(x)=\begin{cases} 0,& \text{if } |x|<1,\\|x|^{-\alpha-\frac{\gamma}{q}-\frac{n}{q}-\frac{1}{2^m}} |\Omega(x')|^{q'-2}\overline{\Omega}(x'),& \text{if } |x|\ge 1.   \end{cases}
\end{align*}
By similar argument as in \cite{CDH2016}, one can show that $f_m\in \dot{K}^{\alpha,p}_{q}(\omega)$. But, for convenience to the reader, we provide details for the proof here. First, we remark that for $k\in\mathbb Z$, $k\geq 0$, we have
\begin{align*}
\|f_{m}\chi_k\|_{q,\omega}&\le \left(\int\limits_{C_k}\left||x|^{-\alpha-\frac{\gamma}{q}-\frac{n}{q}-\frac{1}{2^m}} |\Omega(x')|^{q'-2}\Omega(x')\chi_k(x)\right|^q\omega(x)dx \right)^{\frac{1}{q}}\\
&\le \left(\int\limits_{C_k}\int\limits_{S^{n-1}} |rx'|^{-\alpha q-\gamma-n-\frac{q}{2^m}}|\Omega(x')|^{q(q'-1)}\omega(rx')r^{n-1}d\sigma(x')dr \right)^{\frac{1}{q}}\\
&\lesssim \left(\int\limits_{C_k} r^{-\alpha q-\frac{q}{2^m}-1}\left( \int\limits_{S^{n-1}} |\Omega(x')|^{q(q'-1)}\omega(x')d\sigma(x')\right) dr \right)^{\frac{1}{q}}\\
&\lesssim\left(\int\limits_{C_k}r^{-\alpha q -\frac{q}{2^m}-1}dr\right)^{\frac{1}{q}}\|\Omega\|_{L^{q'}(S^{n-1},\omega(x') d\sigma(x'))}^{\frac{q'}{q}}\\
&=2^{-k(\frac{1}{2^m}+\alpha)}\left|\frac{2^{q(\frac{1}{2^m}+\alpha)}-1}{(\frac{1}{2^m}+\alpha)q} \right|^{\frac{1}{q}}\|\Omega\|_{L^{q'}(S^{n-1},\omega(x') d\sigma(x'))}^{\frac{q'}{q}}.
\end{align*}
It is obvious that for $k<0$, then $\|f_m\chi_k \|_{q,\omega}=0$. Therefore, 
\begin{align*}
\|f_{m} \|_{\dot{K}^{\alpha,p}_{q}(\omega)}&= \left(\sum\limits_{k=-\infty}^{\infty}2^{k\alpha p}\|f_{m}\chi_k \|_{q,\omega}^p \right)^{\frac{1}{p}}\\
&\le\left(\sum\limits_{k=0}^{\infty}2^{k\alpha p} \left(2^{-k(\frac{1}{2^m}+\alpha)}\left|\frac{2^{q(\frac{1}{2^m}+\alpha)}-1}{(\frac{1}{2^m}+\alpha)q} \right|^{\frac{1}{q}} \|\Omega\|_{L^{q'}(S^{n-1},\omega(x') d\sigma(x'))}^{\frac{q'}{q}} \right)^p \right)^{\frac{1}{p}}\\
&\le \left|\frac{2^{q(\frac{1}{2^m}+\alpha)}-1}{(\frac{1}{2^m}+\alpha)q} \right|^{\frac{1}{q}}\|\Omega\|_{L^{q'}(S^{n-1},\omega(x') d\sigma(x'))}^{\frac{q'}{q}}\left(\sum\limits_{k=0}^{\infty}2^{k\alpha p} \left(2^{-k(\frac{1}{2^m}+\alpha)}  \right)^p \right)^{\frac{1}{p}}\\
&\le \left|\frac{2^{q(\frac{1}{2^m}+\alpha)}-1}{(\frac{1}{2^m}+\alpha)q} \right|^{\frac{1}{q}}\|\Omega\|_{L^{q'}(S^{n-1},\omega(x') d\sigma(x'))}^{\frac{q'}{q}}\left(\sum\limits_{k=0}^{\infty}2^{-\frac{kp}{2^m}}\right)^{\frac{1}{p}}<\infty.
\end{align*}
Now, it is easy to see that
\begin{align*}
&\mathcal{H}_{\Phi,\Omega}f_m\\
&=\begin{cases} 0,&\text{if } |x|<1,\\|x|^{-\alpha-\frac{\gamma}{q}-\frac{n}{q}-\frac{1}{2^m}} \int_{S(x)}\left(\int_{S^{n-1}}\Phi(u^{-1})u^{1-2n-\frac{\gamma}{q}-\frac{n}{q}-\frac{1}{2^m}} |\Omega(y')|^{q'}d\sigma(y')  \right)du,&\text{if } |x|\ge 1,   \end{cases}
\end{align*}
where $S(x)=\{u\in(0,\infty): ||x|uy'|\ge 1 \}$. For $k\in \mathbb Z$ such that $k\geq 1$, let
$$ S_k=\left\{u\in(0,\infty): |u|\ge \frac{1}{2^{k-1}}\right\}.$$
It is clear that the sequence $\{S_k \}_{k\ge 0}$ is increasing and tends to $(0,\infty)$. Let $1\le m\le k$. Then,  for all $x\in C_k$,  there exits a measurable subset $A$ of $(0,\infty)$ with $|A|=0$ such that 
\begin{align*}
S(x) \supset S_m\backslash A.
\end{align*}
 Because, for each $k\le 0$, $\mathcal{H}_{\Phi,\Omega}f_m\chi_k=0$, so we have
\begin{align*}
&\|\mathcal{H}_{\Phi,\Omega}f_m\chi_k\|_{q,\omega}\\
&=\left(\int_{C_k}\left||x|^{-\alpha-\frac{\gamma}{q}-\frac{n}{q}-\frac{1}{2^m}} \int_{S_k}\left(\int_{S^{n-1}}\Phi(u^{-1})u^{1-2n-\frac{\gamma}{q}-\frac{n}{q}-\frac{1}{2^m}} |\Omega(y')|^{q'}d\sigma(y')  \right)du  \right|^q\omega(x)dx\right)^{\frac{1}{q}}\\
&\ge \left(\int_{C_k}|x|^{-\alpha-\frac{\gamma}{q}-\frac{n}{q}-\frac{1}{2^m}}\omega(x)\left( \int_{S_m}\left(\int_{S^{n-1}}|\Phi(u^{-1})|u^{1-2n-\frac{\gamma}{q}-\frac{n}{q}-\frac{1}{2^m}} |\Omega(y')|^{q'}d\sigma(y')  \right)du \right)^q dx\right)^{\frac{1}{q}}\\
&\ge \left(\int_{S_m}|\Phi(u^{-1})|u^{1-2n-\frac{\gamma}{q}-\frac{n}{q}-\frac{1}{2^m}}du\right)\left(\int_{C_k}|x|^{-\alpha q-\gamma-n-\frac{q}{2^m}}\omega(x)dx\right)^{\frac{1}{q}}  \left(\int_{S^{n-1}}  |\Omega(y')|^{q'}d\sigma(y')   \right)\\
&=\int_{S_m}|\Phi(u^{-1})|u^{1-2n-\frac{\gamma}{q}-\frac{n}{q}-\frac{1}{2^m}}du\|f'_m\chi_k \|_{q,\omega}\|\Omega \|^{q'}_{L^{q'}(S^{n-1})},
\end{align*}
where
\begin{align*}
f'_{m}(x)=\begin{cases} 0, &\text{if } |x|<1,\\|x|^{-\alpha-\frac{\gamma}{q}-\frac{n}{q}-\frac{1}{2^m}} ,&\text{if } |x|\ge 1.   \end{cases}
\end{align*}
It is clear that $\|f'_m\chi_{\mathbb{R}^n\backslash B(0,1)} \|_{q,\omega}=0$ for all $k\le 0$. Therefore,
\begin{align*}
&\|\mathcal{H}_{\Phi,\Omega}f_m \|_{\dot{K}^{\alpha,p}_{q}(\omega)}\\
&\ge \left(\sum\limits_{k=-\infty}^{\infty}2^{k\alpha p}   \left(\int_{S_m}|\Phi(u^{-1})|u^{1-2n-\frac{\gamma}{q}-\frac{n}{q}-\frac{1}{2^m}}du\|f'_m\chi_k \|_{q,\omega}\|\Omega \|^{q'}_{L^{q'}(S^{n-1})} \right)^p \right)^{\frac{1}{p}}\\
&\ge \|\Omega \|^{q'}_{L^{q'}(S^{n-1})} \left(\sum\limits_{k=-\infty}^{\infty}2^{k\alpha p}  \|f'_m\chi_k \|_{q,\omega}^p\right)^{\frac{1}{p}} \left(\int_{S_m}|\Phi(u^{-1})|u^{1-2n-\frac{\gamma}{q}-\frac{n}{q}-\frac{1}{2^m}}du \right)\\
&\ge \|\Omega \|^{q'}_{L^{q'}(S^{n-1})} \left(\sum\limits_{k=m}^{\infty}2^{k\alpha p}  2^{-kp(\frac{1}{2^m}+\alpha)}\left|\frac{2^{q(\frac{1}{2^m}+\alpha)}-1}{(\frac{1}{2^m}+\alpha)q} \right|^{\frac{p}{q}}\right)^{\frac{1}{p}}\mathcal{\mathcal C}_2(m)\\
&\ge \|\Omega \|^{q'}_{L^{q'}(S^{n-1})}2^{-\frac{m}{2^m}} \left(\sum\limits_{k=0}^{\infty}2^{-\frac{kp}{2^m}}\right)^{\frac{1}{p}}\left|\frac{2^{q(\frac{1}{2^m}+\alpha)}-1}{(\frac{1}{2^m}+\alpha)q} \right|^{\frac{1}{q}}\mathcal{C}_2(m),
\end{align*}
where $\mathcal{C}_2(m):=\displaystyle\int_{S_m}|\Phi(u^{-1})|u^{1-2n-\frac{\gamma}{q}-\frac{n}{q}-\frac{1}{2^m}}du$. Now, since the operator $\mathcal{H}_{\Phi,\Omega}$ is bounded on the space $\dot{K}^{\alpha,p}_{q}(\omega)$, we yield
\begin{align*} 
&\|\mathcal{H}_{\Phi,\Omega} \|_{\dot{K}^{\alpha,p}_{q}(\omega)\to \dot{K}^{\alpha,p}_{q}(\omega)}\ge \frac{\|\mathcal{H}_{\Phi,\Omega}f_m \|_{\dot{K}^{\alpha,p}_{q}(\omega)}}{\|f_m\|_{\dot{K}^{\alpha,p}_{q}(\omega)}}\\
&\ge \frac{\|\Omega \|^{q'}_{L^{q'}(S^{n-1})}2^{-\frac{m}{2^m}} \left(\sum\limits_{k=0}^{\infty} 2^{-\frac{kp}{2^m}}\right)^{\frac{1}{p}}\left|\frac{2^{q(\frac{1}{2^m}+\alpha)}-1}{(\frac{1}{2^m}+\alpha)q} \right|^{\frac{1}{q}} \mathcal{C}_2(m)}{\left|\frac{2^{q(\frac{1}{2^m}+\alpha)}-1}{(\frac{1}{2^m}+\alpha)q} \right|^{\frac{1}{q}}\|\Omega\|_{L^{q'}(S^{n-1},\omega(x') d\sigma(x'))}^{\frac{q'}{q}}\left(\sum\limits_{k=0}^{\infty}2^{-\frac{kp}{2^m}}   \right)^{\frac{1}{p}}}\\
&\ge \frac{\|\Omega \|^{q'}_{L^{q'}(S^{n-1})}}{\|\Omega \|^{\frac{q'}{q}}_{L^{q'}(S^{n-1},\omega(x') d\sigma(x'))}} .2^{-\frac{m}{2^m}} \int_{S_m}|\Phi(u^{-1})|u^{1-2n-\frac{\gamma}{q}-\frac{n}{q}-\frac{1}{2^m}}du.
\end{align*}
Thus, letting $m\to \infty$,  by the Lebesgue dominated convergence theorem we obtain
\begin{align*}
\|\mathcal{H}_{\Phi,\Omega} \|_{\dot{K}^{\alpha,p}_{q}(\omega)\to \dot{K}^{\alpha,p}_{q}(\omega)} &\gtrsim \int_{0}^\infty|\Phi(u^{-1})|u^{1-2n-\frac{\gamma}{q}-\frac{n}{q}}du.\frac{\|\Omega \|^{q'}_{L^{q'}(S^{n-1})}}{\|\Omega \|^{\frac{q'}{q}}_{L^{q'}(S^{n-1},\omega(x') d\sigma(x'))}} . 
\end{align*}
\end{proof}
By Theorem \ref{Herz}, we also have the following useful corollary.
\begin{corollary}
Let  $1\le p,q<\infty$ and $\Omega\in L^{q'}(S^{n-1})$, $\omega(x)=|x|^\gamma$. Let $\Phi$ be a nonnegative radial function. Then, ${\mathcal H}_{\Phi,\Omega}$  is a bounded operator on $\dot{K}^{\alpha,p}_{q}(\omega)$ if and only if 
$$\mathcal{C}_{2.1}=\int_{0}^\infty|\Phi(t^{-1})|\;t^{1-2n-\frac{\gamma}{q}-\frac{n}{q}}dt<\infty.$$
 Moreover,
\begin{align*}
\|{\mathcal H}_{\Phi,\Omega} \|_{\dot{K}^{\alpha,p}_{q}(\omega)}\simeq\mathcal{C}_{2.1}.\|\Omega\|_{L^{q'}(S^{n-1})}.
\end{align*}
\end{corollary}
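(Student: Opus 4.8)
The plan is to obtain this corollary as a direct specialization of Theorem~\ref{Herz} to the power weight $\omega(x)=|x|^{\gamma}$. The first observation, exactly as in the remark following Theorem~\ref{Morrey1}, is that such $\omega$ restricts to the constant $\omega(x')=|x'|^{\gamma}=1$ on $S^{n-1}$. Consequently the side condition $\omega(x')\ge c>0$ of Theorem~\ref{Herz}(i) is satisfied with $c=1$, the two spherical norms $\|\Omega\|_{L^{q'}(S^{n-1})}$ and $\|\Omega\|_{L^{q'}(S^{n-1},\,\omega(x')d\sigma(x'))}$ coincide, and the assumption $\Omega\in L^{q'}(S^{n-1})$ of the corollary is the same as the hypothesis needed in part~(ii). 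Moreover, a nonnegative radial $\Phi$ is in particular a real function of constant sign on $\mathbb{R}^n$, so Theorem~\ref{Herz}(ii) applies; and since $\Phi\ge 0$ one has $\mathcal{C}_2=\mathcal{C}_{2.1}$.

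For the sufficiency direction I would simply invoke Theorem~\ref{Herz}(i): if $\mathcal{C}_{2.1}<\infty$, then $\mathcal{H}_{\Phi,\Omega}$ is bounded on $\dot{K}^{\alpha,p}_{q}(\omega)$ and
\[
\|\mathcal{H}_{\Phi,\Omega}\|_{\dot{K}^{\alpha,p}_{q}(\omega)\to\dot{K}^{\alpha,p}_{q}(\omega)}\lesssim \mathcal{C}_{2.1}\,\|\Omega\|_{L^{q'}(S^{n-1})}.
\]
For the necessity direction I would invoke Theorem~\ref{Herz}(ii): if $\mathcal{H}_{\Phi,\Omega}$ is bounded on $\dot{K}^{\alpha,p}_{q}(\omega)$, then $\mathcal{C}_{2.1}=\mathcal{C}_2<\infty$ and
\[
\|\mathcal{H}_{\Phi,\Omega}\|_{\dot{K}^{\alpha,p}_{q}(\omega)\to\dot{K}^{\alpha,p}_{q}(\omega)}\ge \mathcal{C}_2\,\frac{\|\Omega\|_{L^{q'}(S^{n-1})}^{q'}}{\|\Omega\|_{L^{q'}(S^{n-1},\,\omega(x')d\sigma(x'))}^{q'/q}}.
\]

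The only computation to perform is the algebraic simplification of this lower bound. Since $\omega(x')=1$, the weighted sphere norm in the denominator equals the plain one, so the fraction reduces to $\|\Omega\|_{L^{q'}(S^{n-1})}^{\,q'-q'/q}$, and because $\tfrac1q+\tfrac1{q'}=1$ the exponent is $q'\bigl(1-\tfrac1q\bigr)=q'\cdot\tfrac1{q'}=1$. Hence $\|\mathcal{H}_{\Phi,\Omega}\|\gtrsim \mathcal{C}_{2.1}\,\|\Omega\|_{L^{q'}(S^{n-1})}$, which together with the upper bound from the sufficiency part gives the equivalence $\|\mathcal{H}_{\Phi,\Omega}\|_{\dot{K}^{\alpha,p}_{q}(\omega)}\simeq \mathcal{C}_{2.1}\,\|\Omega\|_{L^{q'}(S^{n-1})}$. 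There is no real obstacle here: the corollary is bookkeeping on top of Theorem~\ref{Herz}, strictly parallel to the remark after Theorem~\ref{Morrey1}, and the only point requiring a moment of care is that the power-weight hypothesis simultaneously validates the side condition in part~(i) and collapses the two spherical norms appearing in the estimate of part~(ii).
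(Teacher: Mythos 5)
Your proposal is correct and is exactly the argument the paper intends: the corollary is stated as an immediate consequence of Theorem \ref{Herz}, obtained by noting that $\omega(x')=1$ on $S^{n-1}$ for the power weight, so the hypothesis $\omega(x')\ge c>0$ holds, the two spherical norms coincide, and the exponent $q'-q'/q=1$ collapses the lower bound to $\mathcal{C}_{2.1}\|\Omega\|_{L^{q'}(S^{n-1})}$, matching the upper bound. Your bookkeeping (including the observation that a nonnegative $\Phi$ is of constant sign, so part (ii) applies) is precisely what the paper's one-line justification relies on.
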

Next, we also give the boundedness and bound of the rough Hausdorff operator on the weighted Morrey-Herz spaces. 
\begin{theorem}\label{MorreyHerz}
Let $1\le q<\infty, 0<p<\infty, \gamma\in\mathbb{R}, \lambda>0$, and $\Omega\in L^{q'}(S^{n-1})$.
\\
{\rm (i)} If $\omega(x')\ge c>0$ for all $x'\in S^{n-1}$ and
\begin{align*}
\mathcal C_3=\int\limits_0^{\infty}\frac{|\Phi(t)|}{t^{1-\frac{\gamma}{q}-\frac{n}{q}+\lambda-\alpha}}dt<\infty,
\end{align*}
then ${{\mathcal H}}_{\Phi,\Omega}$ is a bounded operator on $M\dot{K}^{\alpha,\lambda}_{p,q}(\omega)$. Moreover,
\begin{align*}
\|{\mathcal H}_{\Phi,\Omega} \|_{M\dot{K}^{\alpha,\lambda}_{p,q}(\omega)}\lesssim \mathcal{C}_3\|\Omega\|_{L^{q'}(S^{n-1})}.
\end{align*}
{\rm (ii)} Conversely, suppose $\Omega\in L^{q'}(S^{n-1}, \omega(x')d\sigma(x'))$ and $\Phi$ is a real function with a constant sign in $\mathbb{R}^n$. Then, if ${\mathcal H}_{\Phi,\Omega}$ is bounded on the $ M\dot{K}^{\alpha,\lambda}_{p,q}(\omega)$, then $\mathcal{C}_3<\infty$. Furthermore,
\begin{align*}
\|{{\mathcal H}}_{\Phi,\Omega} \|_{M\dot{K}^{\alpha,\lambda}_{p,q}(\omega)\to M\dot{K}^{\alpha,\lambda}_{p,q}(\omega)}\ge \mathcal C_3.\frac{\|\Omega \|^{q'}_{L^{q'}(S^{n-1})}}{\|\Omega \|^{\frac{q'}{q}}_{L^{q'}(S^{n-1},\omega(x') d\sigma(x'))}}.
\end{align*}
\end{theorem}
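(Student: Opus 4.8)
The plan is to follow the template established in the proofs of Theorem~\ref{Morrey1} and Theorem~\ref{Herz}, adapting the Morrey-Herz norm. For part~(i), I would start from the representation \eqref{eq1}, apply the Minkowski integral inequality to pull the $t$-integral outside the $L^q_\omega$-norm over each annulus $C_k$, and then perform the change of variable $v = |x|t^{-1}y'$ (equivalently the dilation $x \mapsto tx$) exactly as in \eqref{eq6}. This introduces the Jacobian factor $t^{-\gamma/q - n/q}$ coming from $\omega(tx)=t^\gamma\omega(x)$ together with the $t^{n}$ from Lebesgue measure, reduced by one power from the $1/t$ in the kernel; combined with the radial kernel this gives a factor $|\Phi(t)|\,t^{-1+\gamma/q+n/q}$. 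The Hölder inequality on $S^{n-1}$ as in \eqref{eq2}/\eqref{eq6a1} peels off $\|\Omega\|_{L^{q'}(S^{n-1})}$, and the lower bound $\omega(x')\ge c$ lets us replace $d\sigma(y')$ by $\omega(y')d\sigma(y')$ so that the remaining spherical integral recombines into $\|f\chi_{tC_k}\|_{q,\omega}$.

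Next I would handle the dilation of the annulus: writing $2^{\ell-1}<t\le 2^{\ell}$, the set $tC_k$ is contained in $C_{k+\ell-1}\cup C_{k+\ell}$, so $\|f\chi_{tC_k}\|_{q,\omega}\le \|f\chi_{k+\ell-1}\|_{q,\omega}+\|f\chi_{k+\ell}\|_{q,\omega}$. Substituting this into the definition of the Morrey-Herz norm, I would fix $k_0\in\mathbb Z$, weight by $\omega(B_{k_0})^{-\lambda/n}$ and sum $2^{k\alpha p}$ over $k\le k_0$. Using Lemma~\ref{lemma} to convert $\omega(B_{k_0})^{-\lambda/n}$ into $2^{-k_0(n+\gamma)\lambda/n}$ up to constants, and shifting the summation index by $\ell$ (noting $k\le k_0 \iff k+\ell\le k_0+\ell$), the inner sum becomes $\omega(B_{k_0+\ell})^{-\lambda/n}\big(\sum_{j\le k_0+\ell}2^{(j-\ell)\alpha p}\|f\chi_j\|_{q,\omega}^p\big)^{1/p}$ times $2^{\ell(n+\gamma)\lambda/n}\cdot 2^{-\ell(n+\gamma)\lambda/n}$ correction factors. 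Pulling out $2^{-\ell\alpha}$ and the weight-ratio $2^{\ell(n+\gamma)\lambda/n}$ (which accounts for the difference between $\omega(B_{k_0})$ and $\omega(B_{k_0+\ell})$) leaves exactly $\|f\|_{M\dot K^{\alpha,\lambda}_{p,q}(\omega)}$ multiplied by $t^{-\alpha+\lambda(n+\gamma)/n}\simeq 2^{\ell(\lambda(n+\gamma)/n-\alpha)}$; collecting all $t$-dependent factors gives the integrand $|\Phi(t)|\,t^{-1+\gamma/q+n/q-\lambda+\alpha}$, i.e. precisely $\mathcal C_3$. The case $0<p<1$ needs the elementary inequality $(a+b)^p\le a^p+b^p$ in place of Minkowski, but otherwise the argument is unchanged.

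For part~(ii) I would imitate the test-function computation in Theorem~\ref{Herz}(ii): choose, for large $m$, $f_m(x)=0$ for $|x|<1$ and $f_m(x)=|x|^{-\alpha+(n+\gamma)\lambda/n-\gamma/q-n/q-2^{-m}}|\Omega(x')|^{q'-2}\overline{\Omega}(x')$ for $|x|\ge 1$, with exponent tuned so that $f_m\in M\dot K^{\alpha,\lambda}_{p,q}(\omega)$ with norm $\simeq \|\Omega\|_{L^{q'}(S^{n-1},\omega\,d\sigma)}^{q'/q}$ uniformly (one checks the annulus norms $\|f_m\chi_k\|_{q,\omega}$ decay geometrically and that the $\omega(B_{k_0})^{-\lambda/n}$ factor is absorbed by the $(n+\gamma)\lambda/n$ shift in the exponent, using Lemma~\ref{lemma}). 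Applying $\mathcal H_{\Phi,\Omega}$ to $f_m$, the power law is reproduced and the spherical integral gives $\|\Omega\|_{L^{q'}(S^{n-1})}^{q'}$, while the radial integral restricted to the increasing sets $S_m=\{u\ge 2^{-(k-1)}\}$ yields a truncated constant $\mathcal C_3(m)=\int_0^\infty \cdots$ with the correct exponent. Comparing $\|\mathcal H_{\Phi,\Omega}f_m\|$ with $\|f_m\|$ and letting $m\to\infty$ via the monotone/dominated convergence theorem recovers $\mathcal C_3$ and the asserted lower bound, forcing $\mathcal C_3<\infty$. The main obstacle — and the only place demanding care — is bookkeeping the interaction between the index shift $k\mapsto k+\ell$ and the Morrey-Herz weight $\omega(B_{k_0})^{-\lambda/n}$: one must verify that the excess power of $2^\ell$ from changing $\omega(B_{k_0})$ to $\omega(B_{k_0+\ell})$ is exactly $t^{\lambda(n+\gamma)/n}$ and combines with the Jacobian and $t^{-\alpha}$ factors to produce the stated exponent $1-\gamma/q-n/q+\lambda-\alpha$; getting this exponent right (rather than the Herz exponent, which lacks the $+\lambda$) is the crux.
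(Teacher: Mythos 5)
Your overall strategy for part (i) is the same as the paper's (Minkowski, the dilation $u=xt^{-1}$, H\"older on $S^{n-1}$, the inclusion of the dilated annulus in two consecutive annuli, then an index shift), but your bookkeeping of the Morrey--Herz normalization is wrong, and it changes the final constant. The space $M\dot K^{\alpha,\lambda}_{p,q}(\omega)$ in this theorem is the one of Definition~2.4, whose damping factor is $2^{-k_0\lambda}$, \emph{not} $\omega(B_{k_0})^{-\lambda/n}$ (that factor belongs to the two--weight space of Definition~2.7 used only in the commutator theorems). With the correct norm, the index shift $k\mapsto k+\ell$ costs $2^{\ell\lambda}\cdot 2^{-\ell\alpha}$ and, since $2^{\ell}\simeq t^{-1}$ (note the change of variable sends $C_k$ to $t^{-1}C_k$, not to $tC_k$ as you wrote), this is $t^{\alpha-\lambda}$, which combines with the Jacobian factor $t^{-1+\gamma/q+n/q}$ to give exactly the integrand of $\mathcal C_3$. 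Your version produces the excess $2^{\ell(n+\gamma)\lambda/n}$, i.e.\ $\lambda$ gets replaced by $\lambda(1+\gamma/n)$, which does not match $\mathcal C_3$ unless $\gamma=0$; indeed your own intermediate factor $t^{-\alpha+\lambda(n+\gamma)/n}$ is inconsistent with your concluding claim that the integrand is ``precisely $\mathcal C_3$''. The same miscalibration propagates into part (ii): your proposed test function carries the exponent $(n+\gamma)\lambda/n$ where it should carry $\lambda$, so it would not reproduce $\mathcal C_3$ either. (Separately, for part (ii) the paper does not need the truncated family $f_m$ with the $2^{-m}$ perturbation at all: because $\lambda>0$, the untruncated power function $f(x)=|x|^{-\alpha-\frac nq-\frac\gamma q+\lambda}|\Omega(x')|^{q'-2}\overline\Omega(x')$ already lies in $M\dot K^{\alpha,\lambda}_{p,q}(\omega)$, and $\mathcal H_{\Phi,\Omega}$ reproduces it up to the factor $\mathcal C_3\|\Omega\|^{q'}_{L^{q'}(S^{n-1})}$, so no limiting argument is required.) Fixing the normalization restores the paper's argument; as written, the proof establishes a bound with the wrong power of $t$.
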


\begin{proof}
(i) From the Minkowski inequality and changing variable $u=xt^{-1}$, we obtain
\begin{align}\label{eqa6}
\|{{\mathcal H}}_{\Phi,\Omega}f\chi_k \|_{q,\omega}&\le\int\limits_0^{\infty}\frac{|\Phi(t)|}{t}\left( \int\limits_{C_k}\left|  \int\limits_{S^{n-1}}\Omega(y')f(|x|t^{-1}y')d\sigma(y') \right|^q\omega(x) dx \right)^{\frac{1}{q}}dt\notag\\
&=\int\limits_0^{\infty}\frac{|\Phi(t)|}{t^{1-\frac{\gamma}{q}-\frac{n}{q}}}\left( \int\limits_{\frac{1}{t}C_k}\left|  \int\limits_{S^{n-1}}\Omega(y')f(|u|y')d\sigma(y') \right|^q\omega(u) du \right)^{\frac{1}{q}}dt.
\end{align}

By \eqref{eq6a1} and \eqref{eqa6}, it follows that
\begin{align*}
&\|{{\mathcal H}}_{\Phi,\Omega}f\chi_k \|_{q,\omega}\notag\\
&\le\int\limits_0^{\infty}\frac{|\Phi(t)|}{t^{1-\frac{\gamma}{q}-\frac{n}{q}}}\left( \int\limits_{\frac{1}{t}C_k}\left|\left(\int\limits_{S^{n-1}}|f(|u|y')|^qd\sigma(y') \right)^{\frac{1}{q}} \|\Omega \|_{L^{q'}(S^{n-1})} \right|^q\omega(u) du \right)^{\frac{1}{q}}dt\notag\\
&\le \|\Omega \|_{L^{q'}(S^{n-1})} \int\limits_0^{\infty}\frac{|\Phi(t)|}{t^{1-\frac{\gamma}{q}-\frac{n}{q}}}\mathcal{J'}(t)^{\frac{1}{q}}dt,
\end{align*}
where $\mathcal{J'}(t):=\displaystyle\int\limits_{\frac{1}{t}C_k} \left(\int\limits_{S^{n-1}}|f(|u|y')|^qd\sigma(y') \right) \omega(u) du$. By the similar estimate as $\mathcal{J}(u)$, we also have
\begin{align*}
\mathcal{J'}(t)\lesssim\|f\chi_{\frac{1}{t}C_k} \|_{q,\omega}^q.
\end{align*}

Note that for each $t\in (0, \infty)$, we  can find an integer number $\ell=\ell(t)$ such that $2^{\ell-1}<\dfrac{1}{t}\le 2^{\ell}$. This implies that $\frac{1}{t}C_k$ is a subset of $C_{k+\ell-1}\cup C_{k+\ell}$. Thus, we obtain
\begin{align*}
\|f\chi_{\frac{1}{t}C_k}\|_{q,\omega}\le \|f\chi_{k+\ell-1} \|_{q,\omega}+\|f\chi_{k+\ell} \|_{q,\omega}.
\end{align*}
Hence, 
\begin{align}\label{eq312a}
\|{{\mathcal H}}_{\Phi,\Omega}f\chi_k \|_{q,\omega}\lesssim \|\Omega \|_{L^{q'}(S^{n-1})} \int\limits_0^{\infty}\frac{|\Phi(t)|}{t^{1-\frac{\gamma}{q}-\frac{n}{q}}}\left( \|f\chi_{k+\ell-1} \|_{q,\omega}+\|f\chi_{k+\ell} \|_{q,\omega} \right) dt.
\end{align}
We consider two case as follows.\\
Case 1: $1\le p< \infty$. Then, we get
\begin{align*}
&\|{{\mathcal H}}_{\Phi,\Omega}f \|_{M\dot{K}^{\alpha,\lambda}_{p,q}(\omega)}\\
&= \sup\limits_{k_0\in\mathbb{Z}}2^{-k_0\lambda}\left(\sum\limits_{k=-\infty}^{k_0}2^{k\alpha p}\|{{\mathcal H}}_{\Phi,\Omega}f(x)\chi_k(x) \|_{q,\omega}^p \right)^{\frac{1}{p}}\\
&\lesssim \sup\limits_{k_0\in\mathbb{Z}}2^{-k_0\lambda}\left(\sum\limits_{k=-\infty}^{k_0}2^{k\alpha p}\left(\|\Omega \|_{L^{q'}(S^{n-1})} \int\limits_0^{\infty}\frac{|\Phi(t)|}{t^{1-\frac{\gamma}{q}-\frac{n}{q}}}\left( \|f\chi_{k+\ell-1} \|_{q,\omega}+\|f\chi_{k+\ell} \|_{q,\omega} \right) dt \right)^p \right)^{\frac{1}{p}}\\
&\lesssim \|\Omega \|_{L^{q'}(S^{n-1})} \int\limits_0^{\infty}\frac{|\Phi(t)|}{t^{1-\frac{\gamma}{q}-\frac{n}{q}}}\sup\limits_{k_0\in\mathbb{Z}}2^{-k_0\lambda}\left(\sum\limits_{k=-\infty}^{k_0}2^{k\alpha p}\left( \|f\chi_{k+\ell-1} \|_{q,\omega}+\|f\chi_{k+\ell} \|_{q,\omega}  \right)^p \right)^{\frac{1}{p}}dt.
\end{align*}
It is clear that
\begin{align*}
&\sup\limits_{k_0\in\mathbb{Z}}2^{-k_0\lambda}\left(\sum\limits_{k=-\infty}^{k_0}2^{k\alpha p} \left(  \|f\chi_{k+\ell-1} \|_{q,\omega}+\|f\chi_{k+\ell} \|_{q,\omega}  \right)^p \right)^{\frac{1}{p}}\\
&\le \sup\limits_{k_0\in\mathbb{Z}}2^{-k_0\lambda}\left(\sum\limits_{k=-\infty}^{k_0}2^{k\alpha p}  \|f\chi_{k+\ell-1} \|_{q,\omega}^p \right)^{\frac{1}{p}} +\sup\limits_{k_0\in\mathbb{Z}}2^{-k_0\lambda}\left(\sum\limits_{k=-\infty}^{k_0}2^{k\alpha p}  \|f\chi_{k+\ell} \|_{q,\omega}^p \right)^{\frac{1}{p}}\\
&\lesssim 2^{\ell(\lambda-\alpha)}\|f\|_{M\dot{K}_{p,q}^{\alpha,\lambda}(\omega)} \lesssim \left(\frac{1}{t} \right)^{\lambda-\alpha}\|f\|_{M\dot{K}_{p,q}^{\alpha,\lambda}(\omega)}.
\end{align*}
Consequently, 
\begin{align*}
\|{{\mathcal H}}_{\Phi,\Omega}f \|_{M\dot{K}^{\alpha,\lambda}_{p,q}(\omega)}&\lesssim \|\Omega \|_{L^{q'}(S^{n-1})} \int\limits_0^{\infty}\frac{|\Phi(t)|}{t^{1-\frac{\gamma}{q}-\frac{n}{q}}}\left(\frac{1}{t}\right)^{\lambda-\alpha}\|f\|_{M\dot{K}_{p,q}^{\alpha,\lambda}(\omega)} dt\\
&\lesssim \|\Omega \|_{L^{q'}(S^{n-1})}\|f\|_{M\dot{K}_{p,q}^{\alpha,\lambda}(\omega)}. \int\limits_0^{\infty}\frac{|\Phi(t)|}{t^{1-\frac{\gamma}{q}-\frac{n}{q}+\lambda-\alpha}} dt.
\end{align*}
Case 2: $0<p<1$. It follows from the definition of weighted Morrey-Herz space that
\begin{align*}
\|f\chi_k\|_{q,\omega}\le 2^{k(\lambda-\alpha)}\|f\|_{M\dot{K}^{\alpha,\lambda}_{p,q}(\omega)}.
\end{align*}
For all $f\in M\dot{K}^{\alpha,\lambda}_{p,q}(\omega)$, by \eqref{eq312a}, we obtain
\begin{align*}
\|{{\mathcal H}}_{\Phi,\Omega}f\chi_k \|_{q,\omega}\lesssim \|\Omega \|_{L^{q'}(S^{n-1})} \int\limits_0^{\infty}\frac{|\Phi(t)|}{t^{1-\frac{\gamma}{q}-\frac{n}{q}}}\left( \sum\limits_{i=-1,0}2^{(k+\ell+i)(\lambda-\alpha)}\|f\|_{M\dot{K}^{\alpha,\lambda}_{p,q}(\omega)} \right) dt,
\end{align*}
for all $k\in\mathbb Z$. Thus, 
\begin{align*}
&\|{{\mathcal H}}_{\Phi,\Omega}f \|_{M\dot{K}^{\alpha,\lambda}_{p,q}(\omega)}= \sup\limits_{k_0\in\mathbb{Z}}2^{-k_0\lambda}\left(\sum\limits_{k=-\infty}^{k_0}2^{k\alpha p}\|{{\mathcal H}}_{\Phi,\Omega}f(x)\chi_k(x) \|_{q,\omega}^p \right)^{\frac{1}{p}}\\
&\lesssim \|\Omega \|_{L^{q'}(S^{n-1})}\|f\|_{M\dot{K}^{\alpha,\lambda}_{p,q}(\omega)}. \sup\limits_{k_0\in\mathbb{Z}}2^{-k_0\lambda}\times\\
&\times\left(\sum\limits_{k=-\infty}^{k_0}2^{k\alpha p}\left( \int\limits_0^{\infty}\frac{|\Phi(t)|}{t^{1-\frac{\gamma}{q}-\frac{n}{q}}}\left(\sum\limits_{i=-1,0}2^{(k+\ell+i)(\lambda-\alpha)}\right) dt \right)^p\right)^{\frac{1}{p}}.
\end{align*}
Since $2^{\ell-1}<\dfrac{1}{t}\le 2^{\ell}$ and $\lambda>0$, we estimate
\begin{align*}
\sum\limits_{i=-1,0}2^{(k+l+i)(\lambda-\alpha)}\lesssim \left(\frac{1}{t} \right)^{\lambda-\alpha}2^{k(\lambda-\alpha)}\sum\limits_{i=-1,0}2^{i(\lambda-\alpha)}.
\end{align*}
Therefore, 
\begin{align*}
&\|{{\mathcal H}}_{\Phi,\Omega}f \|_{M\dot{K}^{\alpha,\lambda}_{p,q}(\omega)}\\
&\lesssim \|\Omega \|_{L^{q'}(S^{n-1})} \|f \|_{M\dot{K}^{\alpha,\lambda}_{p,q}(\omega)}\sup\limits_{k_0\in\mathbb{Z}}\left(\sum\limits_{k=-\infty}^{k_0}2^{(k-k_0)\lambda p} \right)^{\frac{1}{p}} \left(\int\limits_0^{\infty}\frac{|\Phi(t)|}{t^{1-\frac{\gamma}{q}-\frac{n}{q}+\lambda-\alpha}}dt \right)\sum\limits_{i=-1,0}2^{i(\lambda-\alpha)}\\
&\lesssim \|\Omega \|_{L^{q'}(S^{n-1})} \|f \|_{M\dot{K}^{\alpha,\lambda}_{p,q}(\omega)}\sup\limits_{k_0\in\mathbb{Z}}\left(\sum\limits_{k=-\infty}^{k_0}2^{(k-k_0)\lambda p} \right)^{\frac{1}{p}} \left(\int\limits_0^{\infty}\frac{|\Phi(t)|}{t^{1-\frac{\gamma}{q}-\frac{n}{q}+\lambda-\alpha}}dt \right)\\
&\lesssim \|\Omega \|_{L^{q'}(S^{n-1})}.\|f \|_{M\dot{K}^{\alpha,\lambda}_{p,q}(\omega)}. \int\limits_0^{\infty}\frac{|\Phi(t)|}{t^{1-\frac{\gamma}{q}-\frac{n}{q}+\lambda-\alpha}}dt .
\end{align*}

(ii) Conversely, suppose ${\mathcal H}_{\Phi,\Omega}$ is bounded on the  space $M\dot{K}^{\alpha,\lambda}_{p,q}(\omega)$. Then, let us choose the function
\begin{align*}
f(x)=|x|^{-\alpha-\frac{n}{q}-\frac{\gamma}{q}+\lambda}|\Omega(x')|^{q'-2}\overline{\Omega}(x').
\end{align*}
We have
\begin{align*}
\|f\chi_k\|_{q,\omega}&=\left(\int\limits_{\mathbb{R}^n}||x|^{-\alpha-\frac{n}{q}-\frac{\gamma}{q}+\lambda}|\Omega(x')|^{q'-2}.\overline{\Omega}(x')\chi_k|^q\omega(x)dx \right)^{\frac{1}{q}}\\
&=\left(\int\limits_{C_k}\int\limits_{S^{n-1}}r^{-\alpha q-n-\gamma+\lambda q}|\Omega(x')|^{q'}r^\gamma\omega(x')r^{n-1}d\sigma(x')dr \right)^{\frac{1}{q}}\\
&=\left(\int\limits_{C_k}r^{-\alpha q+\lambda q-1}dr\int\limits_{S^{n-1}}|\Omega(x')|^{q'}\omega(x') d\sigma(x') \right)^{\frac{1}{q}}\\
&=\left(\int\limits_{C_k}r^{-\alpha q+\lambda q-1}dr\right)^{\frac{1}{q}}\|\Omega\|_{L^{q'}(S^{n-1},\omega(x') d\sigma(x'))}^{\frac{q'}{q}}\\
&= \begin{cases} \ln{2}.\|\Omega\|_{L^{q'}(S^{n-1},\omega(x') d\sigma(x'))}^{\frac{q'}{q}},&\text{if } \alpha=\gamma,\\
2^{k(\lambda-\alpha)}\left|\frac{1-2^{-q(\lambda-\alpha)}}{q(\lambda-\alpha)} \right|^{\frac{1}{q}}\|\Omega\|_{L^{q'}(S^{n-1},\omega(x') d\sigma(x'))}^{\frac{q'}{q}}, &\text{if } \alpha\not =\gamma.
\end{cases}
\end{align*}
Therefore, an easy computation shows that
\begin{align*}
&\|f\|_{M\dot{K}^{\alpha,\lambda}_{p,q}(\omega)}=\sup\limits_{k_0\in \mathbb{Z}}2^{-k_0\lambda}\left(\sum\limits_{k=-\infty}^{k_0}2^{k\alpha p}\|f\chi_k \|_{q,\omega}^p \right)^{\frac{1}{p}}\\
&\lesssim \|\Omega\|_{L^{q'}(S^{n-1},\omega(x') d\sigma(x'))}^{\frac{q'}{q}}\sup\limits_{k_0\in \mathbb{Z}}2^{-k_0\lambda}\left(\sum\limits_{k=-\infty}^{k_0}2^{k\alpha p}\left(2^{k(\lambda-\alpha)} \right)^p    \right)^{\frac{1}{p}}\\
&\lesssim \|\Omega\|_{L^{q'}(S^{n-1},\omega(x') d\sigma(x'))}^{\frac{q'}{q}}\sup\limits_{k_0\in \mathbb{Z}}2^{-k_0\lambda}\left(\sum\limits_{k=-\infty}^{k_0}2^{k\lambda p}\right)^{\frac{1}{p}}<\infty.\\
\end{align*}
On the other hand, we also get
\begin{align*}
{{\mathcal H}}_{\Phi,\Omega}f(x)
&=\int\limits_0^{\infty}\left(\int\limits_{S^{n-1}}\frac{\Phi(t)}{t}\Omega(y')f(|x|t^{-1}y') d\sigma(y')\right)dt\\
&=|x|^{-\alpha-\frac{n}{q}-\frac{\gamma}{q}+\lambda}.\int\limits_0^{\infty}\frac{\Phi(t)}{t}t^{\alpha+\frac{n}{q}+\frac{\gamma}{q}-\lambda}\left(\int\limits_{S^{n-1}}|\Omega(y')|^{q'} d\sigma(y')\right)dt\\
&=|x|^{-\alpha-\frac{n}{q}-\frac{\gamma}{q}+\lambda}.\|\Omega\|_{L^{q'}(S^{n-1})}^{q'}\int\limits_0^{\infty}\frac{\Phi(t)}{t^{1-\frac{\gamma}{q}-\frac{n}{q}+\lambda-\alpha}}dt.
\end{align*}
Hence, it immediately follows that
\begin{align*}
\|{{\mathcal H}}_{\Phi,\Omega}f \|_{M\dot{K}^{\alpha,\lambda}_{p,q}(\omega)}\simeq \||x|^{-\alpha-\frac{n}{q}-\frac{\gamma}{q}+\lambda}\|_{M\dot{K}^{\alpha,\lambda}_{p,q}(\omega)}.\|\Omega\|_{L^{q'}(S^{n-1})}^{q'}\int\limits_0^{\infty}\frac{\Phi(t)}{t^{1-\frac{\gamma}{q}-\frac{n}{q}+\lambda-\alpha}}dt.
\end{align*}
Therefore, 
\begin{align*}
\|{{\mathcal H}}_{\Phi,\Omega} \|_{M\dot{K}^{\alpha,\lambda}_{p,q}(\omega)\to M\dot{K}^{\alpha,\lambda}_{p,q}(\omega)}&\ge \frac{\|{{\mathcal H}}_{\Phi,\Omega}f \|_{M\dot{K}^{\alpha,\lambda}_{p,q}(\omega)}}{\|f\|_{M\dot{K}^{\alpha,\lambda}_{p,q}(\omega)}}\\
&\gtrsim \frac{\int\limits_0^{\infty}\frac{\Phi(t)}{t^{1-\frac{\gamma}{q}-\frac{n}{q}+\lambda-\alpha}}dt\|\Omega\|_{L^{q'}(S^{n-1})}^{q'}\||x|^{-\alpha-\frac{n}{q}-\frac{\gamma}{q}+\lambda}\|_{M\dot{K}^{\alpha,\lambda}_{p,q}(\omega)}}{\|\Omega \|^{\frac{q'}{q}}_{L^{q'}(S^{n-1},\omega(x')d\sigma(x'))}\||x|^{-\alpha-\frac{n}{q}-\frac{\gamma}{q}+\lambda} \|_{M\dot{K}^{\alpha,\lambda}_{p,q}(\omega)}}\\
&\gtrsim \int\limits_0^{\infty}\frac{\Phi(t)}{t^{1-\frac{\gamma}{q}-\frac{n}{q}+\lambda-\alpha}}dt.\frac{\|\Omega\|^{q'}_{L^{q'}(S^{n-1})}}{\|\Omega \|^{\frac{q'}{q}}_{L^{q'}(S^{n-1},\omega(x')d\sigma(x'))}}.
\end{align*}
This ends the proof of theorem.
\end{proof}
By Theorem \ref{MorreyHerz}, we have the following useful corollary when $\omega$ is a power weight function and $\Phi$ is a nonnegative function.
\begin{corollary}
Let  $1\le q<\infty,0\le p<\infty,\gamma\in\mathbb{R},\lambda>0$. Suppose  $\Omega\in L^{q'}(S^{n-1})$, $\omega(x)=|x|^\gamma$, and $\Phi$ is a nonnegative radial function. Then, ${\mathcal H}_{\Phi,\Omega}$  is a bounded operator on $M\dot{K}^{\alpha,\lambda}_{p,q}(\omega)$ if and only if 
$$\mathcal{C}_{3.1}=\int\limits_0^{\infty}\frac{\Phi(t)}{t^{1-\frac{\gamma}{q}-\frac{n}{q}+\lambda-\alpha}}dt<\infty.$$
 Moreover,
\begin{align*}
\|{\mathcal H}_{\Phi,\Omega} \|_{M\dot{K}^{\alpha,\lambda}_{p,q}(\omega)}\simeq\mathcal{C}_{3.1}.\|\Omega\|_{L^{q'}(S^{n-1})}.
\end{align*}
\end{corollary}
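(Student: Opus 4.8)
The plan is to obtain this corollary as a direct specialization of Theorem~\ref{MorreyHerz} to the power weight $\omega(x)=|x|^{\gamma}$, exactly in the spirit of the remark following Theorem~\ref{Morrey1}. The key elementary observation is that when $\omega(x)=|x|^{\gamma}$ one has $\omega(x')=|x'|^{\gamma}=1$ for every $x'\in S^{n-1}$; consequently the hypothesis $\omega(x')\ge c>0$ in part~(i) holds automatically with $c=1$, and the weighted sphere norm collapses, namely $\|\Omega\|_{L^{q'}(S^{n-1},\,\omega(x')d\sigma(x'))}=\|\Omega\|_{L^{q'}(S^{n-1})}$. Moreover, since $\Phi\ge 0$ we have $|\Phi(t)|=\Phi(t)$, so that $\mathcal{C}_{3}=\mathcal{C}_{3.1}$.

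First I would invoke Theorem~\ref{MorreyHerz}(i): under the stated hypotheses, if $\mathcal{C}_{3.1}<\infty$ then $\mathcal{H}_{\Phi,\Omega}$ is bounded on $M\dot{K}^{\alpha,\lambda}_{p,q}(\omega)$ and
\[
\|\mathcal{H}_{\Phi,\Omega}\|_{M\dot{K}^{\alpha,\lambda}_{p,q}(\omega)\to M\dot{K}^{\alpha,\lambda}_{p,q}(\omega)}\lesssim \mathcal{C}_{3.1}\,\|\Omega\|_{L^{q'}(S^{n-1})}.
\]
This yields the ``if'' implication together with the upper bound in the asserted norm equivalence.

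Next I would invoke Theorem~\ref{MorreyHerz}(ii): a nonnegative $\Phi$ has constant sign on $\mathbb{R}^n$, and $\Omega\in L^{q'}(S^{n-1})=L^{q'}(S^{n-1},\omega(x')d\sigma(x'))$, so if $\mathcal{H}_{\Phi,\Omega}$ is bounded on $M\dot{K}^{\alpha,\lambda}_{p,q}(\omega)$ then $\mathcal{C}_{3.1}<\infty$ and
\[
\|\mathcal{H}_{\Phi,\Omega}\|_{M\dot{K}^{\alpha,\lambda}_{p,q}(\omega)\to M\dot{K}^{\alpha,\lambda}_{p,q}(\omega)}\ \ge\ \mathcal{C}_{3.1}\cdot\frac{\|\Omega\|_{L^{q'}(S^{n-1})}^{q'}}{\|\Omega\|_{L^{q'}(S^{n-1},\omega(x')d\sigma(x'))}^{q'/q}}=\mathcal{C}_{3.1}\,\|\Omega\|_{L^{q'}(S^{n-1})}^{\,q'-q'/q}.
\]
Since $\frac1q+\frac1{q'}=1$ forces $q'-\frac{q'}{q}=q'\bigl(1-\tfrac1q\bigr)=q'\cdot\tfrac1{q'}=1$, the exponent equals $1$ and the lower bound becomes $\|\mathcal{H}_{\Phi,\Omega}\|\ge \mathcal{C}_{3.1}\|\Omega\|_{L^{q'}(S^{n-1})}$. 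This gives the ``only if'' implication and the matching lower bound; combining it with the previous step produces $\|\mathcal{H}_{\Phi,\Omega}\|_{M\dot{K}^{\alpha,\lambda}_{p,q}(\omega)}\simeq \mathcal{C}_{3.1}\|\Omega\|_{L^{q'}(S^{n-1})}$, which is the claim.

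I do not anticipate any genuine obstacle here, as the statement is a pure corollary of Theorem~\ref{MorreyHerz}. The only points to keep track of are the trivial bookkeeping that $q'-q'/q=1$, so that the ratio of sphere norms reduces to the single factor $\|\Omega\|_{L^{q'}(S^{n-1})}$, and the observation that the denominator above is nonzero because the standing assumption $\Omega(x')\ne 0$ a.e.\ on $S^{n-1}$ rules out the degenerate case $\Omega\equiv 0$.
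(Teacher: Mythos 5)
Your proposal is correct and is exactly the route the paper takes: the corollary is stated as an immediate specialization of Theorem \ref{MorreyHerz} to $\omega(x)=|x|^{\gamma}$, using the same observation made in the remark after Theorem \ref{Morrey1} that $\omega(x')=1$ on $S^{n-1}$ collapses the ratio of sphere norms to $\|\Omega\|_{L^{q'}(S^{n-1})}$ via $q'-q'/q=1$. Nothing further is needed.
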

%%%%%%%%%%%%%%%%%%%%%%%%%%%%%%%%%%%%%%%%%%%%%%%%%
%%%%%%%%%%%%%%%%%%%%%%%%%%%%%%%%%%%%%%%%%%%%%%%%%%%
Next, we will give the boundedness of the commutator of rough Hausdorff operator on weighted spaces of Morrey-Herz type with their symbols $b$ belonging to Lipschitz space $Lip^{\beta} (\mathbb{R}^n)$ ($0<\beta\leq 1$). Before stating our next results, we want to give the following useful inequality
\begin{align}\label{ceq1}
|b(x)-b(|x|t^{-1}y')|&\le \|b\|_{Lip^{\beta}}.\|x-|x|t^{-1}y' \|^{\beta}\notag\\
& =\|b\|_{Lip^{\beta}}. |x|^{\beta}(1+t^{-1})^{\beta},\;\forall t >0,\;y'\in S^{n-1}.
\end{align}
%%%%%%%%%%%%%%%%%%%%%%%%%%%%%%%%%%%%%%%%%%%%%%%%%%%%%%
\begin{theorem}
Let $1\leq p<\infty$, $\omega_1, \omega_2\in\mathcal{W}_\gamma$ for $\gamma>-n$, and 
$b\in Lip^{\beta} (\mathbb{R}^n)$ for $0<\beta\le 1$.  Let  $\Omega\in L^{p'}(S^{n-1})$ and $ \omega_2(x')\ge c>0$ for all $x'\in S^{n-1}$.   Suppose that  $\lambda_1=\lambda-\frac{\beta p}{n+\gamma}>0$.  Then, if
\begin{equation}
\mathcal C_4=\int\limits_0^{\infty}\frac{|\Phi(t)|}{t^{1+(\gamma+n)\frac{\lambda_1-1}{p}}(1+t^{-1})^{-\beta} } dt  <\infty, \notag
\end{equation}
 the commutator  ${{\mathcal H}}_{\Phi,\Omega}^b$  is a bounded operator from $\dot{B}^{p,\lambda_1}(\omega_1,\omega_2)$ to $\dot{B}^{p,\lambda}(\omega_1,\omega_2)$.
\end{theorem}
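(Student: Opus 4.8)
The plan is to carry the commutator through essentially the same machinery as Theorem~\ref{Morrey1}(i); the only genuinely new point is the absorption of the factor produced by the Lipschitz symbol into the gap between $\lambda$ and $\lambda_1=\lambda-\frac{\beta p}{n+\gamma}$. Fix $R>0$. Using \eqref{Commu} together with \eqref{ceq1} I would first pass to the pointwise bound
$$\bigl|\mathcal{H}_{\Phi,\Omega}^bf(x)\bigr|\le\|b\|_{Lip^\beta}\,|x|^\beta\int\limits_0^{\infty}\frac{|\Phi(t)|}{t}(1+t^{-1})^\beta\int\limits_{S^{n-1}}|\Omega(y')|\,|f(|x|t^{-1}y')|\,d\sigma(y')\,dt,$$
insert it into the building block $\bigl(\omega_2(B(0,R))^{-\lambda}\int_{B(0,R)}|\mathcal{H}_{\Phi,\Omega}^bf(x)|^p\omega_1(x)\,dx\bigr)^{1/p}$ of the $\dot{B}^{p,\lambda}(\omega_1,\omega_2)$ norm, and apply Minkowski's integral inequality in the variable $t$ (legitimate since $p\ge1$) to reach
$$\|\mathcal{H}_{\Phi,\Omega}^bf\|_{\dot{B}^{p,\lambda}(\omega_1,\omega_2)}\le\|b\|_{Lip^\beta}\int\limits_0^{\infty}\frac{|\Phi(t)|}{t}(1+t^{-1})^\beta\,J(t,R)^{1/p}\,dt,$$
where $J(t,R):=\omega_2(B(0,R))^{-\lambda}\int_{B(0,R)}|x|^{\beta p}\bigl(\int_{S^{n-1}}|\Omega(y')||f(|x|t^{-1}y')|\,d\sigma(y')\bigr)^p\omega_1(x)\,dx$.

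The heart of the matter is a bound for $J(t,R)$ that is uniform in $R$, obtained exactly as $\Psi(t)$ was estimated in Theorem~\ref{Morrey1}(i). I would: change variables $u=xt^{-1}$, which turns $B(0,R)$ into $B(0,t^{-1}R)$ and, through $|x|=t|u|$ and the absolute homogeneity $\omega_1(tu)=t^\gamma\omega_1(u)$, produces the scaling factor $t^{\beta p+n+\gamma}$; estimate $|u|^{\beta p}\le(t^{-1}R)^{\beta p}$ on $B(0,t^{-1}R)$; apply H\"older on $S^{n-1}$ with exponents $p,p'$ to extract $\|\Omega\|_{L^{p'}(S^{n-1})}^p$; and pass to polar coordinates, replacing $d\sigma(y')$ by $\omega_1(y')\,d\sigma(y')$ at the cost of the constant furnished by the pointwise lower bound of the weight on $S^{n-1}$ (used precisely as $\omega(x')\ge c$ was used in Theorem~\ref{Morrey1}(i)), so that the remaining radial integral is recognised as $\int_{B(0,t^{-1}R)}|f(x)|^p\omega_1(x)\,dx$. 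By the definition of the $\dot{B}^{p,\lambda_1}(\omega_1,\omega_2)$ norm this is $\le\omega_2(B(0,t^{-1}R))^{\lambda_1}\|f\|_{\dot{B}^{p,\lambda_1}(\omega_1,\omega_2)}^p$, and $\omega_2(B(0,t^{-1}R))=t^{-(n+\gamma)}\omega_2(B(0,R))$ by absolute homogeneity, as already computed inside the proof of Theorem~\ref{Morrey1}. Gathering the powers of $t$ and $R$, the surviving $R$-dependence is
$$R^{\beta p}\,\omega_2(B(0,R))^{\lambda_1-\lambda}\simeq R^{\beta p}\,R^{(n+\gamma)(\lambda_1-\lambda)}=R^{\beta p-\beta p}=1,$$
by Lemma~\ref{lemma} and the defining relation $\lambda_1=\lambda-\frac{\beta p}{n+\gamma}$; hence $J(t,R)\lesssim t^{(n+\gamma)(1-\lambda_1)}\|\Omega\|_{L^{p'}(S^{n-1})}^p\|f\|_{\dot{B}^{p,\lambda_1}(\omega_1,\omega_2)}^p$ with a constant independent of $R$.

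Finally I would substitute $J(t,R)^{1/p}\lesssim t^{(n+\gamma)(1-\lambda_1)/p}\|\Omega\|_{L^{p'}(S^{n-1})}\|f\|_{\dot{B}^{p,\lambda_1}(\omega_1,\omega_2)}$ into the Minkowski bound and take the supremum over $R>0$; since $1-(n+\gamma)(1-\lambda_1)/p=1+(n+\gamma)(\lambda_1-1)/p$, the $t$-integral that remains is exactly $\mathcal{C}_4$, and one arrives at
$$\|\mathcal{H}_{\Phi,\Omega}^b\|_{\dot{B}^{p,\lambda_1}(\omega_1,\omega_2)\to\dot{B}^{p,\lambda}(\omega_1,\omega_2)}\lesssim\mathcal{C}_4\,\|b\|_{Lip^\beta}\,\|\Omega\|_{L^{p'}(S^{n-1})}<\infty.$$
The one step that needs foresight rather than routine computation is the handling of $|u|^{\beta p}$: replacing it by $(t^{-1}R)^{\beta p}$ costs nothing, since the Morrey norm is a supremum over all balls, and it is exactly this crude substitution that leaves behind the power $R^{\beta p}$ which then cancels against $\omega_2(B(0,R))^{\lambda_1-\lambda}$ precisely because of the choice of $\lambda_1$. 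Everything else is a direct reprise of the estimates already carried out for Theorems~\ref{Morrey1}--\ref{MorreyHerz}.
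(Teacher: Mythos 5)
Your argument is correct and follows essentially the same route as the paper: bound the commutator kernel via \eqref{ceq1}, apply Minkowski in $t$, change variables $u=xt^{-1}$, use H\"older on $S^{n-1}$ and the pointwise lower bound of the weight on the sphere to recognise the inner integral as $\int_{B(0,t^{-1}R)}|f|^p\omega_1$, and absorb the Lipschitz factor $|x|^\beta\lesssim R^\beta$ into the gap between $\lambda$ and $\lambda_1$. The only cosmetic difference is that the paper replaces $|x|^\beta$ by $|B(0,R)|^{\beta/n}\simeq\omega_2(B(0,R))^{\beta/(n+\gamma)}$ at the outset (turning the normalization $\omega_2(B(0,R))^{-\lambda}$ into $\omega_2(B(0,R))^{-\lambda_1}$ immediately), whereas you carry the power of $R$ explicitly and cancel it at the end — the same computation in a different order.
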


\begin{proof}
It is easy to see that  for any $x\in B(0,R)$, then $|x|^\beta\leq |B(0,R)|^{\frac{\beta}{n}}$. It is  also important to note that  $\omega_2(B(0, R))\simeq |B(0,R)|^{\frac{n+\gamma}{n}}$ for all $\gamma>-n$.
From this and by \eqref{ceq1} above, for all $f\in {\dot{B}^{p,\lambda_1}(\omega_1,\omega_2)}$ we have
\begin{align*}
&\|{{\mathcal H}}^b_{\Phi,\Omega}f \|_{\dot{B}^{p,\lambda}(\omega_1,\omega_2)}=\sup\limits_{R>0}\left(\frac{1}{\omega_2(B(0,R))^{\lambda}}\int\limits_{B(0,R)}\left|{\mathcal H}^b_{\Phi,\Omega}f    \right|^p\omega_1(x)dx \right)^{\frac{1}{p}}\\
&\le\|b\|_{Lip^{\beta}}\sup\limits_{R>0}\left(\frac{1}{\omega_2(B(0,R))^{\lambda}}\int\limits_{B(0,R)}\left|\int\limits_0^{\infty}\int\limits_{S^{n-1}}\frac{\Phi(t)}{t(1+t^{-1})^{-\beta}}  \Omega(y')f(|x|t^{-1}y')\right.\right.\\
&\left.\left.\Big.\times |x|^{\beta} d\sigma(y') dt   \right|^p\omega_1(x)dx \right)^{\frac{1}{p}}\\
&\lesssim\|b\|_{Lip^{\beta}}\sup\limits_{R>0}\left(\frac{1}{\omega_2(B(0,R))^{\lambda_1}}\int\limits_{B(0,R)}\left|\int\limits_0^{\infty}\int\limits_{S^{n-1}}\frac{\Phi(t)}{t(1+t^{-1})^{-\beta}}\Omega(y')f(|x|t^{-1}y')\right.\right.\\
&\left.\left.\Big.\times d\sigma(y') dt   \right|^p\omega_1(x)dx \right)^{\frac{1}{p}},
\end{align*}
where $\lambda_1=\lambda-\frac{\beta p}{n+\gamma}$.
Now, using the Minkowski inequality and changing variable $u=xt^{-1}$, we get
\begin{align*}
&\|{{\mathcal H}}_{\Phi,\Omega}f \|_{\dot{B}^{p,\lambda}(\omega_1,\omega_2)}\\
&\lesssim\|b\|_{Lip^{\beta}}\sup\limits_{R>0}\int\limits_0^{\infty}\frac{|\Phi(t)|}{t(1+t^{-1})^{-\beta}}\left(\frac{1}{\omega_2(B(0,R))^{\lambda_1}}\int\limits_{B(0,R)}\left|\int\limits_{S^{n-1}}\Omega(y')f(|x|t^{-1}y') d\sigma(y')    \right|^p \right.\\
&\left.\Big.\times\omega_1(x)dx \right)^{\frac{1}{p}}dt\\
&\lesssim\|b\|_{Lip^{\beta}}\sup\limits_{R>0}\int\limits_0^{\infty}\frac{|\Phi(t)|}{t^{1-\frac{\gamma}{p}-\frac{n}{p}} (1+t^{-1})^{-\beta}}\left(\frac{1}{\omega_2(B(0,R))^{\lambda_1}}\int\limits_{B(0,t^{-1}R)}\left|\int\limits_{S^{n-1}}\Omega(y')f(|u|y') d\sigma(y')    \right|^p\right.\\
&\left.\Big.\times \omega_1(u)du \right)^{\frac{1}{p}}dt.
\end{align*}
It follows from \eqref{eq2} that
\begin{align}\label{ceq2}
&\|{{\mathcal H}}_{\Phi,\Omega}f \|_{\dot{B}^{p,\lambda}(\omega_1,\omega_2)}\lesssim\|b\|_{Lip^{\beta}}\|\Omega \|_{L^{p'}(S^{n-1})}\sup\limits_{R>0}\int\limits_0^{\infty}\frac{|\Phi(t)|}{t^{1-\frac{\gamma}{p}-\frac{n}{p}} (1+t^{-1})^{-\beta}}\mathcal{F}(t) dt,
\end{align}
where   $\mathcal{F}(t):=\left(\dfrac{1}{\omega_2(B(0,R))^{\lambda_1}}\displaystyle\int\limits_{B(0,t^{-1}R)}\left(\displaystyle\int\limits_{S^{n-1}}|f(|u|y')|^pd\sigma(y') \right) \omega_1(u)du \right)^{\frac{1}{p}}$.
Now, we put $u=rx'$, so
\begin{align}\label{ceq3}
\mathcal{F}(t)&=\left(\frac{1}{\omega_2(B(0,R))^{\lambda_1}}\int\limits_{B(0,R)}\int\limits_{S^{n-1}}\left(\int\limits_{S^{n-1}}|f(|rx'|y')|^pd\sigma(y') \right)d\sigma(x') \omega_1(rx')r^{n-1}dr \right)^{\frac{1}{p}}\notag\\
&=\omega(S^{n-1})^{\frac{1}{p}}\left(\frac{1}{\omega_2(B(0,R))^{\lambda_1}}\int\limits_{B(0,R)}r^{\gamma+ n-1}\left(\int\limits_{S^{n-1}}|f(|r|y')|^pd\sigma(y') \right) dr \right)^{\frac{1}{p}}\notag\\
&\lesssim\left(\frac{1}{\omega_2(B(0,R))^{\lambda_1}}\int\limits_{B(0,R)}r^{\gamma+ n-1}\left(\int\limits_{S^{n-1}}|f(|r|y')|^pd\sigma(y') \right) dr \right)^{\frac{1}{p}}.
\end{align}
Note that we have $\dfrac{1}{\omega_2(B(0,R))^{\lambda_1}}=\dfrac{1}{t^{(\gamma+ n)\lambda_1}\omega_2(B(0,{t^{-1}R}))^{\lambda_1}}$. Hence,  by \eqref{ceq2}, \eqref{ceq3} and the condition $\omega_1(x')>c>0$ for all $x'\in S^{n-1}$, we obtain
\begin{align*}
&\|{{\mathcal H}}_{\Phi,\Omega}f \|_{\dot{B}^{p,\lambda}(\omega_1,\omega_2)}\\
&\lesssim\|b\|_{Lip^{\beta}}\|\Omega \|_{L^{p'}(S^{n-1})}\sup\limits_{R>0}\int\limits_0^{\infty}\frac{|\Phi(t)|}{t^{1-\frac{\gamma}{p}-\frac{n}{p}} (1+t^{-1})^{-\beta}}\left(\frac{1}{t^{(\gamma+ n)\lambda_1}\omega_2(B(0,{t^{-1}R}))^{\lambda_1}}\right.\\
&\left.\times\int\limits_{B(0,t^{-1}R)}r^{\gamma+ n-1}\left(\int\limits_{S^{n-1}}|f(|r|y')|^p \omega_1(y') d\sigma(y') \right) dr \right)^{\frac{1}{p}} dt\\
&\lesssim\|b\|_{Lip^{\beta}}\|\Omega \|_{L^{p'}(S^{n-1})}\int\limits_0^{\infty}\frac{|\Phi(t)|}{t^{1+(\gamma+n)\frac{\lambda_1-1}{p}} (1+t^{-1})^{-\beta}}\sup\limits_{R>0}\left(\frac{1}{\omega_2(B(0,{t^{-1}R}))^{\lambda_1}}\right.\\
&\left.\times\int\limits_{B(0,t^{-1}R)}r^{\gamma+ n-1}\left(\int\limits_{S^{n-1}}|f(|r|y')|^p\omega_1(y') d\sigma(y') \right) dr \right)^{\frac{1}{p}} dt\\
&\lesssim\|b\|_{Lip^{\beta}}.\|\Omega \|_{L^{p'}(S^{n-1})}.\|f\|_{\dot{B}^{p,\lambda_1}(\omega_1,\omega_2)}.\int\limits_0^{\infty}\frac{|\Phi(t)|}{t^{1+(\gamma+n)\frac{\lambda_1-1}{p}} (1+t^{-1})^{-\beta}} dt .
\end{align*} 
 This implies that the commutator  ${{\mathcal H}}_{\Phi,\Omega}^b$ is determined as a bounded operator from $\dot{B}^{p,\lambda_1}(\omega_1,\omega_2)$ to $\dot{B}^{p,\lambda}(\omega_1,\omega_2)$. The proof of the theorem is completed.
\end{proof}
%%%%%%%%%%%%%%%%%%%%%%%%%%%%%%%%%%%%%%%%%%%%%%%%%%%%%%%%
%%%%%%%%%%%%%%%%%%%%%%%%%%%%%%%%%%%%%%%%%%%%%%%%%%%%%%%%
Finally, it is also interesting to give the boundedness of the commutator  ${{\mathcal H}}_{\Phi,\Omega}^b$ on the two weighted Herz type spaces and on the  two weighted Morrey-Herz type spaces. More precisely, we have the results as follows.
\begin{theorem}\label{Herz}
Let $1\le p<\infty,1\le q<\infty$, $0<\beta\le 1$, $\gamma>-n$ and $\alpha_1=\alpha_2+\frac{n\beta}{n+\gamma}$. 
 Suppose $b\in Lip^{\beta} (\mathbb{R}^n)$, $\omega_1, \omega_2\in \mathcal{W}_\gamma$ with $\omega_2(x')\ge c>0$ for all $x'\in S^{n-1}$, $\Omega\in L^{q'}(S^{n-1})$, and
\begin{equation}
\mathcal C_5=\int\limits_0^{\infty}\frac{|\Phi(t)|}{t^{1-{\frac{\gamma}{q}-\frac{n}{q}}-\alpha_1\left(1+\frac{\gamma}{n}\right)}(1+t^{-1})^{-\beta}}    dt< \infty. \notag
\end{equation}
Then the commutator  ${{\mathcal H}}_{\Phi,\Omega}^b$  is a bounded operator from $\dot{K}^{\alpha_1,p}_{q}(\omega_1,\omega_2)$ to $\dot{K}^{\alpha_2,p}_{q}(\omega_1,\omega_2)$.
\end{theorem}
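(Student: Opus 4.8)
The plan is to follow the template of the proofs of Theorems \ref{Morrey1}--\ref{MorreyHerz}, the only genuinely new ingredient being that the Lipschitz factor furnished by \eqref{ceq1} has to be carried through the dyadic estimate and then absorbed into the weight $\omega_1$ by means of Lemma \ref{lemma}. Fix $k\in\mathbb Z$. For $x\in C_k$ one has $\abs{x}^{\beta}\le 2^{k\beta}$, so \eqref{ceq1} yields the pointwise bound
\[
\bigl|\mathcal H_{\Phi,\Omega}^{b}f(x)\bigr|\le \|b\|_{Lip^{\beta}}\,2^{k\beta}\int_0^{\infty}\int_{S^{n-1}}\frac{|\Phi(t)|}{t}\,(1+t^{-1})^{\beta}\,|\Omega(y')|\,|f(\abs{x}t^{-1}y')|\,d\sigma(y')\,dt ,\qquad x\in C_k .
\]
Then I would apply Minkowski's inequality in $t$, the change of variables $u=xt^{-1}$ (which, since $\omega_2\in\mathcal W_\gamma$, turns $\omega_2(x)\,dx$ into $t^{\gamma+n}\omega_2(u)\,du$ and $C_k$ into $\tfrac1t C_k$), the H\"older inequality on $S^{n-1}$ with the exponent pair $(q,q')$ against $\Omega\in L^{q'}(S^{n-1})$, and finally the bound $\omega_2(x')\ge c>0$ exactly as in \eqref{eq4} and \eqref{eq6a1}. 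This gives
\[
\|\mathcal H_{\Phi,\Omega}^{b}f\chi_k\|_{q,\omega_2}\lesssim \|b\|_{Lip^{\beta}}\,\|\Omega\|_{L^{q'}(S^{n-1})}\,2^{k\beta}\int_0^{\infty}\frac{|\Phi(t)|}{t^{\,1-\frac{\gamma}{q}-\frac{n}{q}}(1+t^{-1})^{-\beta}}\,\|f\chi_{\frac1t C_k}\|_{q,\omega_2}\,dt ,
\]
and, picking $\ell=\ell(t)$ with $2^{\ell-1}<\tfrac1t\le 2^{\ell}$, one has $\tfrac1t C_k\subset C_{k+\ell-1}\cup C_{k+\ell}$, hence $\|f\chi_{\frac1t C_k}\|_{q,\omega_2}\le\|f\chi_{k+\ell-1}\|_{q,\omega_2}+\|f\chi_{k+\ell}\|_{q,\omega_2}$.

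The second step is the book\-keeping that makes the indices fit together. By Lemma \ref{lemma}, $\omega_1(B_k)\simeq 2^{k(n+\gamma)}$, so $2^{k\beta}\simeq\omega_1(B_k)^{\beta/(n+\gamma)}$; combined with the hypothesis $\alpha_1=\alpha_2+\frac{n\beta}{n+\gamma}$ this says precisely that $\omega_1(B_k)^{\alpha_2/n}\,2^{k\beta}\simeq\omega_1(B_k)^{\alpha_1/n}$. Moreover $2^{\ell-1}<\tfrac1t\le2^{\ell}$ pins $2^{-\ell}$ to the interval $(t/2,t]$, so $\omega_1(B_k)^{\alpha_1/n}\simeq t^{\alpha_1(n+\gamma)/n}\,\omega_1(B_{k+\ell})^{\alpha_1/n}$ with constants depending only on $\alpha_1,n,\gamma$. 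Inserting the bound above into $\|\cdot\|_{\dot K^{\alpha_2,p}_{q}(\omega_1,\omega_2)}$, using $p\ge1$ to apply Minkowski's integral inequality (moving the $\ell^{p}(\mathbb Z)$ norm inside $\int_0^{\infty}dt$), splitting into the $k+\ell-1$ and $k+\ell$ pieces and re-indexing each piece, the two sums collapse to $t^{\alpha_1(n+\gamma)/n}\|f\|_{\dot K^{\alpha_1,p}_{q}(\omega_1,\omega_2)}$. One is left with
\[
\|\mathcal H_{\Phi,\Omega}^{b}f\|_{\dot K^{\alpha_2,p}_{q}(\omega_1,\omega_2)}\lesssim \|b\|_{Lip^{\beta}}\|\Omega\|_{L^{q'}(S^{n-1})}\,\|f\|_{\dot K^{\alpha_1,p}_{q}(\omega_1,\omega_2)}\int_0^{\infty}\frac{|\Phi(t)|}{t^{\,1-\frac{\gamma}{q}-\frac{n}{q}-\alpha_1(1+\frac{\gamma}{n})}(1+t^{-1})^{-\beta}}\,dt ,
\]
and the last integral is exactly $\mathcal C_5$ because $1-\tfrac{\gamma}{q}-\tfrac{n}{q}-\alpha_1\tfrac{n+\gamma}{n}=1-\tfrac{\gamma}{q}-\tfrac{n}{q}-\alpha_1(1+\tfrac{\gamma}{n})$; this proves the asserted boundedness.

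I expect the work to be essentially book\-keeping rather than conceptual: the delicate points are (a) tracking the exponents of $t$, of $2^{k}$, and of $\omega_1(B_k)$ so that they recombine correctly after the change of variables and the dyadic re-indexing, and in particular verifying that $2^{-\ell}\simeq t$ and $\omega_1(B_k)^{\alpha_1/n}\simeq t^{\alpha_1(n+\gamma)/n}\omega_1(B_{k+\ell})^{\alpha_1/n}$ hold with constants independent of $t$; and (b) using the H\"older exponent pair $(q,q')$ rather than $(p,p')$ at the spherical integral, so that the inner power of $|f|$ matches the outer $L^{q}_{\omega_2}$ norm. Since the hypothesis is $p\ge1$, no separate $0<p<1$ subcase is needed, so there is no obstacle of the kind that appeared in Case~2 of the proof of Theorem \ref{MorreyHerz}.
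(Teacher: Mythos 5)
Your proposal is correct and follows essentially the same route as the paper's proof: the pointwise Lipschitz bound \eqref{ceq1} with $|x|^{\beta}\lesssim 2^{k\beta}\simeq |B_k|^{\beta/n}$ on $C_k$, Minkowski plus the substitution $u=xt^{-1}$, H\"older with $(q,q')$ on the sphere, the dyadic inclusion $\tfrac1t C_k\subset C_{k+\ell-1}\cup C_{k+\ell}$, and the absorption of $2^{k\beta}$ into $\omega_1(B_k)^{\alpha_2/n}$ via Lemma \ref{lemma} and the relation $\alpha_1=\alpha_2+\frac{n\beta}{n+\gamma}$, exactly as in the paper's estimate of the quantity $\mathcal{B}$. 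Your exponent bookkeeping and the observation that no $0<p<1$ subcase is needed here both match the paper.
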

%%%%%%%%%%%%%%%%%%%%%%%%%%
\begin{proof}
Let $f\in \dot{K}^{\alpha_1,p}_{q}(\omega_1,\omega_2)$.  For any $k\in\mathbb{Z}$, by \eqref{ceq1} and the Minkowski inequality, we get
\begin{align*}
&\|{{\mathcal H}}_{\Phi,\Omega}^bf\chi_k \|_{q,\omega_2}\\
&=\left(\int_{C_k} \left|\int\limits_0^{\infty}\int\limits_{S^{n-1}}\frac{\Phi(t)}{t}\Omega(y')f(|x|t^{-1}y') \left(b(x)-b(|x|t^{-1}y') \right)d\sigma(y') dt \right|^{q}\omega_2(x)dx \right)^{\frac{1}{q}}\\
&\le \left(\int_{C_k} \left|\int\limits_0^{\infty}\int\limits_{S^{n-1}}\frac{|\Phi(t)|}{t}\Omega(y')f(|x|t^{-1}y') \left(\|b\|_{Lip^{\beta}}.|x|^{\beta}.(1+t^{-1})^{\beta} \right)d\sigma(y') dt \right|^{q}\omega_2(x)dx \right)^{\frac{1}{q}}\\
&\lesssim \|b\|_{Lip^{\beta}}\left(\int_{C_k} \left|\int\limits_0^{\infty}\int\limits_{S^{n-1}}\frac{|\Phi(t)|}{t(1+t^{-1})^{-\beta}}\Omega(y')f(|x|t^{-1}y') |x|^{\beta} d\sigma(y') dt \right|^{q}\omega_2(x)dx \right)^{\frac{1}{q}}\\
&\lesssim \|b\|_{Lip^{\beta}}\int\limits_0^{\infty}\frac{|\Phi(t)|}{t(1+t^{-1})^{-\beta}}     \left(\int_{C_k} \left|\int\limits_{S^{n-1}}\Omega(y')f(|x|t^{-1}y') |x|^{\beta} d\sigma(y')  \right|^{q}\omega_2(x)dx \right)^{\frac{1}{q}}dt\\
&\lesssim \|b\|_{Lip^{\beta}}|B_k|^{\frac{\beta}{n}}\int\limits_0^{\infty}\frac{|\Phi(t)|}{t(1+t^{-1})^{-\beta}}     \left(\int_{C_k} \left|\int\limits_{S^{n-1}}\Omega(y')f(|x|t^{-1}y') d\sigma(y')  \right|^{q}\omega_2(x)dx \right)^{\frac{1}{q}}dt.
\end{align*}
Using changing variable $u=xt^{-1}$ and by \eqref{eq6a1} again, we obtain
\begin{align}\label{ceq5}
&\|{{\mathcal H}}_{\Phi,\Omega}^bf\chi_k \|_{q,\omega_2}\notag\\
&\lesssim \|b\|_{Lip^{\beta}}|B_k|^{\frac{\beta}{n}}\int\limits_0^{\infty}\frac{|\Phi(t)|}{t^{1-{\frac{\gamma}{q}-\frac{n}{q}}}(1+t^{-1})^{-\beta}}     \left(\int_{\frac{1}{t} C_k} \left|\int\limits_{S^{n-1}}\Omega(y')f(|u|y') d\sigma(y')  \right|^{q}\omega_2(u) du \right)^{\frac{1}{q}}  dt\notag\\
&\lesssim \|b\|_{Lip^{\beta}}\|\Omega \|_{L^{q'}(S^{n-1})}|B_k|^{\frac{\beta}{n}}\int\limits_0^{\infty}\frac{|\Phi(t)|}{t^{1-{\frac{\gamma}{q}-\frac{n}{q}}}(1+t^{-1})^{-\beta}}     \left(\mathcal{J}(t,\omega_2) \right)^{\frac{1}{q}}  dt\notag\\
&\lesssim \|b\|_{Lip^{\beta}}\|\Omega \|_{L^{q'}(S^{n-1})}|B_k|^{\frac{\beta}{n}}\int\limits_0^{\infty}\frac{|\Phi(t)|}{t^{1-{\frac{\gamma}{q}-\frac{n}{q}}}(1+t^{-1})^{-\beta}}     \|f\chi_{\frac{1}{t}C_k} \|_{q,\omega_2}  dt\notag\\
&\lesssim \|b\|_{Lip^{\beta}}\|\Omega \|_{L^{q'}(S^{n-1})}|B_k|^{\frac{\beta}{n}}\int\limits_0^{\infty}\frac{|\Phi(t)|}{t^{1-{\frac{\gamma}{q}-\frac{n}{q}}}(1+t^{-1})^{-\beta}}    \left(\|f\chi_{k+\ell-1} \|_{q,\omega_2}+\|f\chi_{k+\ell} \|_{q,\omega_2} \right)  dt.
\end{align}
where  $\mathcal{J} (t,\omega_2):=\displaystyle\int\limits_{\frac{1}{t}C_k} \left(\displaystyle\int\limits_{S^{n-1}}|f(|u|y')|^{q}d\sigma(y') \right) \omega_2(u) du$, and $\ell=\ell(t)$ is an integer number such that $2^\ell \simeq t^{-1}$.
On the other hand, by the Minkowski inequality for $1\leq p<\infty$, we have
\begin{align*}
&\|{{\mathcal H}}_{\Phi,\Omega}^bf\|_{\dot{K}_{q}^{\alpha_2,p}(\omega_1,\omega_2)}=\left(\sum\limits_{k\in\mathbb{Z}}\omega_1(B_k)^{\alpha_2\frac{p}{n}}\|{{\mathcal H}}_{\Phi,\Omega}^bf\chi_k\|^p_{L^{q}(\mathbb{R}^n;\omega_2)} \right)^{\frac{1}{p}}\\
&\lesssim \|b\|_{Lip^{\beta}}\|\Omega \|_{L^{q'}(S^{n-1})}\left(\sum\limits_{k\in\mathbb{Z}}\omega_1(B_k)^{\alpha_2\frac{p}{n}} \left(|B_k|^{\frac{\beta}{n}}\int\limits_0^{\infty}\frac{|\Phi(t)|}{t^{1-{\frac{\gamma}{q}-\frac{n}{q}}}(1+t^{-1})^{-\beta}}\right.\right.\\
&\left.\left.\Big.\times    \left(\|f\chi_{k+\ell-1} \|_{q,\omega_2}+\|f\chi_{k+\ell} \|_{q,\omega_2} \right)  dt \right)^p \right)^{\frac{1}{p}}\\
&\lesssim \|b\|_{Lip^{\beta}}\|\Omega \|_{L^{q'}(S^{n-1})}\int\limits_0^{\infty}\frac{|\Phi(t)|}{t^{1-{\frac{\gamma}{q}-\frac{n}{q}}}(1+t^{-1})^{-\beta}}\mathcal{B} dt,
\end{align*}
where $\mathcal{B}:=\left(\sum\limits_{k\in\mathbb{Z}}\omega_1(B_k)^{\alpha_2\frac{p}{n}} |B_k|^{\frac{\beta}{n}p}    \left(\|f\chi_{k+\ell-1} \|_{q,\omega_2}+\|f\chi_{k+\ell} \|_{q,\omega_2} \right)^p \right)^{\frac{1}{p}}$.
It is not hard to see that
\begin{align*}
\mathcal{B}\le \left(\sum\limits_{k\in\mathbb{Z}}\omega_1(B_k)^{\alpha_2\frac{p}{n}} |B_k|^{\frac{\beta}{n}p}   \|f\chi_{k+\ell-1} \|_{q,\omega_2}^p \right)^{\frac{1}{p}}+\left(\sum\limits_{k\in\mathbb{Z}}\omega_1(B_k)^{\alpha_2\frac{p}{n}} |B_k|^{\frac{\beta}{n}p}    \|f\chi_{k+\ell} \|_{q,\omega_2}^p \right)^{\frac{1}{p}}.
\end{align*}
Note that it follows from Lemma \ref{lemma} that $\dfrac{|B_k|}{\omega_1(B_k)^{\frac{n}{n+\gamma}}}$ is a constant and
\begin{align}\label{ceq6}
\frac{\omega_1(B_k)}{\omega_1(B_{k+\ell+i})}=2^{-(\ell+i)(n+\gamma)},\;\; i=-1,0.
\end{align}
With $\alpha_1=\alpha_2+\dfrac{n\beta}{n+\gamma}$, by $2^\ell\simeq t^{-1}$, we obtain
\begin{align*}
\mathcal{B}&\le \left(\sum\limits_{k\in\mathbb{Z}}\omega_1(B_{k+\ell-1})^{\alpha_1\frac{p}{n}}    \|f\chi_{k+\ell-1} \|_{q,\omega_2}^p \right)^{\frac{1}{p}}\left(\frac{\omega_1(B_{k})}{\omega_1(B_{k+\ell-1})}\right)^{\frac{\alpha_1}{n}}\left(\frac{|B_k|}{\omega_1(B_k)^{\frac{n}{n+\gamma}}} \right)^{\frac{\beta}{n}}               \\
&+ \left(\sum\limits_{k\in\mathbb{Z}}\omega_1(B_{k+\ell})^{\alpha_1\frac{p}{n}}    \|f\chi_{k+\ell} \|_{q,\omega_2}^p \right)^{\frac{1}{p}}\left(\frac{\omega_1(B_{k})}{\omega_1(B_{k+\ell})}\right)^{\frac{\alpha_1}{n}}\left(\frac{|B_k|}{\omega_1(B_k)^{\frac{n}{n+\gamma}}} \right)^{\frac{\beta}{n}}\\
&\le \left(\left(\frac{\omega_1(B_{k})}{\omega_1(B_{k+\ell-1})}\right)^{\frac{\alpha_1}{n}}  +\left(\frac{\omega_1(B_{k})}{\omega_1(B_{k+\ell})}\right)^{\frac{\alpha_1}{n}}   \right)\left(\frac{|B_k|}{\omega_1(B_k)^{\frac{n}{n+\gamma}}} \right)^{\frac{\beta}{n}}        \|f\|_{\dot{K}_q^{\alpha_1,p}(\omega_1,\omega_2)}\\
&\lesssim \left( 2^{-(\ell-1)\alpha_1\left(1+\frac{\gamma}{n}\right)} +  2^{-\ell \alpha_1\left(1+\frac{\gamma}{n}\right)}\right)  \|f\|_{\dot{K}_q^{\alpha_1,p}(\omega_1,\omega_2)}\\
&\lesssim \left(\frac{1}{t}\right)^{-\alpha_1\left(1+\frac{\gamma}{n}\right)} \|f\|_{\dot{K}_q^{\alpha_1,p}(\omega_1,\omega_2)}.
\end{align*}
Consequently,
\begin{align*}
&\|{{\mathcal H}}_{\Phi,\Omega}^bf\|_{\dot{K}_{q}^{\alpha_2,p}(\omega_1,\omega_2)}\\
&\lesssim \|b\|_{Lip^{\beta}}\|\Omega \|_{L^{q'}(S^{n-1})}\int\limits_0^{\infty}\frac{|\Phi(t)|}{t^{1-{\frac{\gamma}{q}-\frac{n}{q}}}(1+t^{-1})^{-\beta}}  \left( \frac{1}{t}\right)^{-\alpha_1\left(1+\frac{\gamma}{n}\right)}  \|f\|_{\dot{K}_q^{\alpha_1,p}(\omega_1,\omega_2)}  dt\\
&\lesssim \|b\|_{Lip^{\beta}}\|\Omega \|_{L^{q'}(S^{n-1})} \|f\|_{\dot{K}_q^{\alpha_1,p}(\omega_1,\omega_2)} \int\limits_0^{\infty}\frac{|\Phi(t)|}{t^{1-{\frac{\gamma}{q}-\frac{n}{q}}-\alpha_1\left(1+\frac{\gamma}{n}\right)}(1+t^{-1})^{-\beta}}   dt.
\end{align*}
Therefore, the theorem is completely proved.
\end{proof}
%%%%%%%%%%%%%%%%%%%%%%%%%%%%%%%%%%%%%%%%%%%%%%
%%%%%%%%%%%%%%%%%%%%%%%%%%%%%%%%%%%%%%%%%%%%%%
Similarly, we also have the following result for the two weighted Morrey-Herz spaces. 
\begin{theorem}
Let $0< p<\infty,1\le q<\infty$, $0<\beta\le 1$, $\gamma>-n$ and $\alpha_1=\alpha_2+\frac{n\beta}{n+\gamma}$. 
 Suppose $b\in Lip^{\beta} (\mathbb{R}^n)$, $\omega_1, \omega_2\in \mathcal{W}_\gamma$ with $\omega_2(x')\ge c>0$ for all $x'\in S^{n-1}$, $\Omega\in L^{q'}(S^{n-1})$, and
\begin{equation}
\mathcal C_5=\int\limits_0^{\infty}\frac{|\Phi(t)|}{t^{1-{\frac{\gamma}{q}-\frac{n}{q}+(\lambda-\alpha_1)\left(1+\frac{\gamma}{n}\right)}}(1+t^{-1})^{-\beta}}dt < \infty. \notag
\end{equation}
Then the commutator  ${{\mathcal H}}_{\Phi,\Omega}^b$  is a bounded operator from $M\dot{K}_{p,q}^{\alpha_1,\lambda}(\omega_1,\omega_2)$ to $M\dot{K}_{p,q}^{\alpha_2,\lambda}(\omega_1,\omega_2)$.
\end{theorem}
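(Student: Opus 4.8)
The plan is to mirror the proof of Theorem~\ref{MorreyHerz}, feeding in the pointwise Lipschitz estimate \eqref{ceq1} exactly as in the two weighted Herz commutator estimate just established, and then to split the summation according to whether $1\le p<\infty$ or $0<p<1$. First I would produce a blockwise bound. Fix $k\in\mathbb Z$ and $f\in M\dot{K}_{p,q}^{\alpha_1,\lambda}(\omega_1,\omega_2)$. Starting from the definition \eqref{Commu} of $\mathcal H_{\Phi,\Omega}^b$, apply \eqref{ceq1} together with $|x|^\beta\le C|B_k|^{\beta/n}$ for $x\in C_k$, then Minkowski's inequality, the change of variable $u=xt^{-1}$ (which, since $\omega_2\in\mathcal W_\gamma$, yields the scaling factor $t^{\frac{\gamma}{q}+\frac{n}{q}-1}$), Hölder's inequality on the sphere \eqref{eq6a1}, and the bound $\int_{\frac1tC_k}\bigl(\int_{S^{n-1}}|f(|u|y')|^q d\sigma(y')\bigr)\omega_2(u)\,du\lesssim\|f\chi_{\frac1tC_k}\|_{q,\omega_2}^q$ (obtained exactly as for $\mathcal J'(t)$ in the proof of Theorem~\ref{MorreyHerz}, using $\omega_2(x')\ge c>0$). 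Choosing, for each $t>0$, an integer $\ell=\ell(t)$ with $2^\ell\simeq t^{-1}$ so that $\frac1tC_k\subset C_{k+\ell-1}\cup C_{k+\ell}$, this gives the Morrey--Herz analogue of \eqref{ceq5}, namely that $\|\mathcal H_{\Phi,\Omega}^bf\chi_k\|_{q,\omega_2}$ is bounded by $\|b\|_{Lip^\beta}\|\Omega\|_{L^{q'}(S^{n-1})}|B_k|^{\beta/n}$ times $\int_0^\infty|\Phi(t)|\,t^{-(1-\frac{\gamma}{q}-\frac{n}{q})}(1+t^{-1})^{\beta}\bigl(\|f\chi_{k+\ell-1}\|_{q,\omega_2}+\|f\chi_{k+\ell}\|_{q,\omega_2}\bigr)\,dt$.

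Next comes the weight bookkeeping via Lemma~\ref{lemma}, which gives $|B_k|^{\beta/n}\simeq\omega_1(B_k)^{\beta/(n+\gamma)}$ and $\omega_1(B_m)/\omega_1(B_{m'})=2^{(m-m')(n+\gamma)}$, while the hypothesis $\alpha_1=\alpha_2+\frac{n\beta}{n+\gamma}$ reads $\frac{\beta}{n+\gamma}=\frac{\alpha_1-\alpha_2}{n}$; hence $\omega_1(B_k)^{\alpha_2/n}|B_k|^{\beta/n}\simeq\omega_1(B_k)^{\alpha_1/n}$, the identity that absorbs the extra $|B_k|^{\beta/n}$ into a shift of Herz exponent. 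Also, taking $k_0=k$ in the norm gives $\|f\chi_k\|_{q,\omega_2}\le\omega_1(B_k)^{(\lambda-\alpha_1)/n}\|f\|_{M\dot{K}_{p,q}^{\alpha_1,\lambda}(\omega_1,\omega_2)}$. For $1\le p<\infty$ I plug the blockwise bound into $\|\cdot\|_{M\dot{K}_{p,q}^{\alpha_2,\lambda}(\omega_1,\omega_2)}$, use the Minkowski integral inequality to pull $\int_0^\infty(\cdot)\,dt$ outside the $\ell^p$-sum, split the two-term factor, shift the summation index ($k\mapsto k+\ell-1$, resp. $k+\ell$), and combine $\omega_1(B_{k_0})^{-\lambda/n}=\omega_1(B_{k_0+\ell-1})^{-\lambda/n}2^{(\ell-1)(n+\gamma)\lambda/n}$ with the shift of $\omega_1(B_k)^{\alpha_1 p/n}$ to see the inner factor is $\lesssim2^{(\ell-1)(n+\gamma)(\lambda-\alpha_1)/n}\|f\|_{M\dot{K}_{p,q}^{\alpha_1,\lambda}}\simeq(1/t)^{(\lambda-\alpha_1)(1+\frac{\gamma}{n})}\|f\|_{M\dot{K}_{p,q}^{\alpha_1,\lambda}}$; the leftover $t$-integral is then exactly $\mathcal C_5$. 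For $0<p<1$ I instead substitute $\|f\chi_{k+\ell+i}\|_{q,\omega_2}\le\omega_1(B_{k+\ell+i})^{(\lambda-\alpha_1)/n}\|f\|$ ($i=-1,0$) into the blockwise bound; since $\omega_1(B_{k+\ell+i})=\omega_1(B_k)2^{(\ell+i)(n+\gamma)}$ and $|B_k|^{\beta/n}\omega_1(B_k)^{(\lambda-\alpha_1)/n}\simeq\omega_1(B_k)^{(\lambda-\alpha_2)/n}$, this yields $\|\mathcal H_{\Phi,\Omega}^bf\chi_k\|_{q,\omega_2}\lesssim\mathcal C_5\|b\|_{Lip^\beta}\|\Omega\|_{L^{q'}(S^{n-1})}\omega_1(B_k)^{(\lambda-\alpha_2)/n}\|f\|$, and feeding this into the norm leaves $\sup_{k_0}\omega_1(B_{k_0})^{-\lambda/n}\bigl(\sum_{k\le k_0}\omega_1(B_k)^{\lambda p/n}\bigr)^{1/p}$, finite because $\lambda>0$ and $\gamma>-n$ give $\sum_{k\le k_0}2^{k(n+\gamma)\lambda p/n}\simeq2^{k_0(n+\gamma)\lambda p/n}\simeq\omega_1(B_{k_0})^{\lambda p/n}$. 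In both cases $\|\mathcal H_{\Phi,\Omega}^bf\|_{M\dot{K}_{p,q}^{\alpha_2,\lambda}(\omega_1,\omega_2)}\lesssim\mathcal C_5\|b\|_{Lip^\beta}\|\Omega\|_{L^{q'}(S^{n-1})}\|f\|_{M\dot{K}_{p,q}^{\alpha_1,\lambda}(\omega_1,\omega_2)}$.

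I expect the main obstacle to be organizational rather than conceptual: keeping the exponent bookkeeping straight so that the relation $\alpha_1=\alpha_2+\frac{n\beta}{n+\gamma}$ exactly cancels the $|B_k|^{\beta/n}$ coming from \eqref{ceq1} --- which is where the homogeneity of $\omega_1$, via Lemma~\ref{lemma}, is essential --- and treating the case $0<p<1$, where the failure of the $\ell^p$ triangle inequality forces the per-block substitution and a geometric series in $k$ whose convergence rests precisely on the standing hypothesis $\lambda>0$.
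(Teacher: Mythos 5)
Your proposal is correct and follows essentially the same route as the paper: the same blockwise estimate inherited from the Herz commutator argument (the analogue of \eqref{ceq5}), the same absorption of $|B_k|^{\beta/n}$ into the shift $\alpha_2\mapsto\alpha_1$ via Lemma \ref{lemma} and the relation $\alpha_1=\alpha_2+\frac{n\beta}{n+\gamma}$, and the same case split at $p=1$, with Minkowski's integral inequality and an index shift for $1\le p<\infty$ and the per-block substitution $\|f\chi_{k+\ell+i}\|_{q,\omega_2}\le\omega_1(B_{k+\ell+i})^{(\lambda-\alpha_1)/n}\|f\|_{M\dot{K}_{p,q}^{\alpha_1,\lambda}(\omega_1,\omega_2)}$ plus a geometric series in $k$ (convergent since $\lambda>0$, $\gamma>-n$) for $0<p<1$. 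The exponent bookkeeping you describe matches the paper's computation of $\widetilde{\mathcal{B}}$ and $\mathcal{T}$ exactly.
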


\begin{proof}
The proof of the theorem is quite similar to one of Theorem \ref{Herz}, but to convenience to the readers, we also give the brief proof here. In order to estimate the right hand side of \eqref{ceq5}, we need to consider the following  two cases.

Case 1: $1\le p<\infty$. By Minkowski's inequality, it follows from Lemma \ref{lemma} and \eqref{ceq6} that
\begin{align*}
&\|{{\mathcal H}}_{\Phi,\Omega}^bf \|_{M\dot{K}_{p,q}^{\alpha_2,\lambda}(\omega_1,\omega_2)}\\
&\lesssim \|b\|_{Lip^{\beta}}\|\Omega \|_{L^{q'}(S^{n-1})}\sup\limits_{k_0\in\mathbb{Z}}\left(\omega_1(B_{k_0})^{-\frac{\lambda}{n}}\left(\sum\limits_{k=-\infty}^{k_0}\omega_1(B_k)^{\alpha_2\frac{p}{n}}\left( |B_k|^{\frac{\beta}{n}} \right.\right.\right.\\
&\left.\left.\left.\times\int\limits_0^{\infty}\frac{|\Phi(t)|}{t^{1-{\frac{\gamma}{q}-\frac{n}{q}}}(1+t^{-1})^{-\beta}}    \left(\|f\chi_{k+\ell-1} \|_{q,\omega_2}+\|f\chi_{k+\ell} \|_{q,\omega_2} \right)  dt \right)^p \right)^{\frac{1}{p}}\right)\\
&\lesssim \|b\|_{Lip^{\beta}}\|\Omega \|_{L^{q'}(S^{n-1})}\int\limits_0^{\infty}\frac{|\Phi(t)|}{t^{1-{\frac{\gamma}{q}-\frac{n}{q}}}(1+t^{-1})^{-\beta}} \widetilde{\mathcal{B}}  dt,
\end{align*}
where
\[\widetilde{\mathcal{B}}:=\sup\limits_{k_0\in\mathbb{Z}}\left(\omega_1(B_{k_0})^{-\frac{\lambda}{n}}\left(\sum\limits_{k=-\infty}^{k_0}\omega_1(B_k)^{\alpha_2\frac{p}{n}}\left( |B_k|^{\frac{\beta}{n}}    \left(\|f\chi_{k+\ell-1} \|_{q,\omega_2}+\|f\chi_{k+\ell} \|_{q,\omega_2} \right)   \right)^p \right)^{\frac{1}{p}}\right),\] 
and $\ell=\ell(t)$ is an integer number such that $2^\ell \simeq t^{-1}$.
It is clear that
\begin{align*}
\widetilde{\mathcal{B}}&\le \sup\limits_{k_0\in\mathbb{Z}}\omega_1(B_{k_0})^{-\frac{\lambda}{n}}\left(\sum\limits_{k=-\infty}^{k_0}\omega_1(B_{k+\ell-1})^{\alpha_1\frac{p}{n}}    \|f\chi_{k+\ell-1} \|_{q,\omega_2}^p \right)^{\frac{1}{p}}\left(\frac{\omega_1(B_{k})}{\omega_1(B_{k+\ell-1})}\right)^{\frac{\alpha_1}{n}}\left(\frac{|B_k|}{\omega_1(B_k)^{\frac{n}{n+\gamma}}} \right)^{\frac{\beta}{n}}               \\
&+ \sup\limits_{k_0\in\mathbb{Z}}\omega_1(B_{k_0})^{-\frac{\lambda}{n}}\left(\sum\limits_{k=-\infty}^{k_0}\omega_1(B_{k+\ell})^{\alpha_1\frac{p}{n}}    \|f\chi_{k+\ell} \|_{q,\omega_2}^p \right)^{\frac{1}{p}}\left(\frac{\omega_1(B_{k})}{\omega_1(B_{k+\ell})}\right)^{\frac{\alpha_1}{n}}\left(\frac{|B_k|}{\omega_1(B_k)^{\frac{n}{n+\gamma}}} \right)^{\frac{\beta}{n}}\\
&\le \left(\left(\frac{\omega_1(B_{k})}{\omega_1(B_{k+\ell-1})}\right)^{\frac{\alpha_1}{n}}  +\left(\frac{\omega_1(B_{k})}{\omega_1(B_{k+\ell})}\right)^{\frac{\alpha_1}{n}}   \right)\left(\frac{|B_k|}{\omega_1(B_k)^{\frac{n}{n+\gamma}}} \right)^{\frac{\beta}{n}}        \|f\|_{M\dot{K}_{p,q}^{\alpha_1,\lambda}(\omega_1,\omega_2)}\\
&\lesssim 2^{\ell \lambda\left(1+\frac{\gamma}{n}\right)}\left( 2^{-(\ell-1)\alpha_1\left(1+\frac{\gamma}{n}\right)} +  2^{-\ell \alpha_1\left(1+\frac{\gamma}{n}\right)}\right)  \|f\|_{M\dot{K}_{p,q}^{\alpha_1,\lambda}(\omega_1,\omega_2)}\\
&\lesssim \left(\frac{1}{t}\right)^{(\lambda-\alpha_1)\left(1+\frac{\gamma}{n}\right)} \|f\|_{M\dot{K}_{p,q}^{\alpha_1,\lambda}(\omega_1,\omega_2)}.
\end{align*}
Consequently, we have
\begin{align*}
&\|{{\mathcal H}}_{\Phi,\Omega}^bf\|_{M\dot{K}_{p,q}^{\alpha_2,\lambda}(\omega_1,\omega_2)}\\
&\lesssim \|b\|_{Lip^{\beta}}\|\Omega \|_{L^{q'}(S^{n-1})}\int\limits_0^{\infty}\frac{|\Phi(t)|}{t^{1-{\frac{\gamma}{q}-\frac{n}{q}}}(1+t^{-1})^{-\beta}}\left(\frac{1}{t}\right)^{(\lambda-\alpha_1)\left(1+\frac{\gamma}{n}\right)}   \|f\|_{M\dot{K}_{p,q}^{\alpha_1,\lambda}(\omega_1,\omega_2)} dt\\
&\lesssim \|b\|_{Lip^{\beta}}\|\Omega \|_{L^{q'}(S^{n-1})}\int\limits_0^{\infty}\frac{|\Phi(t)|}{t^{1-{\frac{\gamma}{q}-\frac{n}{q}+(\lambda-\alpha_1)\left(1+\frac{\gamma}{n}\right)}}(1+t^{-1})^{-\beta}}  \|f\|_{M\dot{K}_{p,q}^{\alpha_1,\lambda}(\omega_1,\omega_2)} dt.\\
\end{align*}

Case 2: $0<p<1$. We first observe that
\begin{align*}
\|f\chi_{k+\ell+i}\|_{q,\omega_2}&\le \omega_1(B_{k+\ell+i})^{\frac{\lambda-\alpha_1}{n}}\omega_1(B_{k+\ell+i})^{-\frac{\lambda}{n}}\left(\sum\limits_{j=-\infty}^{k+\ell+i}\omega_1(B_j)^{\frac{\alpha_1 p}{n}}\|f\chi_j\|_{q,\omega_2}^p \right)^{\frac{1}{p}}\\
&\le \omega_1(B_{k+\ell+i})^{\frac{\lambda-\alpha_1}{n}} \|f\|_{M\dot{K}_{p,q}^{\alpha_1,\lambda}(\omega_1,\omega_2)}, \;\;i=-1,0.
\end{align*}
Combining this with \eqref{ceq5}, we obtain
\begin{align*}
&\|{{\mathcal H}}_{\Phi,\Omega}^bf \|_{M\dot{K}_{p,q}^{\alpha_2,\lambda}(\omega_1,\omega_2)}\\
&\lesssim \sum_{i=-1,0}\|b\|_{Lip^{\beta}}\|\Omega \|_{L^{q'}(S^{n-1})}\|f\|_{M\dot{K}_{p,q}^{\alpha_1,\lambda}(\omega_1,\omega_2)} \sup\limits_{k_0\in\mathbb{Z}}\left(\omega_1(B_{k_0})^{-\frac{\lambda}{n}}\left(\sum\limits_{k=-\infty}^{k_0}\omega_1(B_k)^{\alpha_2\frac{p}{n}}|B_k|^{\frac{\beta p}{n}}\right.\right.\\
&\left.\left.\times\left( \int\limits_0^{\infty}\frac{|\Phi(t)|}{t^{1-{\frac{\gamma}{q}-\frac{n}{q}}}(1+t^{-1})^{-\beta}}   \omega_1(B_{k+\ell+i})^{\frac{\lambda-\alpha_1}{n}}  dt \right)^p \right)^{\frac{1}{p}}\right)\\
&\lesssim \sum_{i=-1,0}\|b\|_{Lip^{\beta}}\|\Omega \|_{L^{q'}(S^{n-1})}\|f\|_{M\dot{K}_{p,q}^{\alpha_1,\lambda}(\omega_1,\omega_2)} \sup\limits_{k_0\in\mathbb{Z}}\left(\sum\limits_{k=-\infty}^{k_0}\left( \int\limits_0^{\infty}\frac{|\Phi(t)|}{t^{1-{\frac{\gamma}{q}-\frac{n}{q}}}(1+t^{-1})^{-\beta}} \mathcal{T}  dt \right)^p \right)^{\frac{1}{p}},
\end{align*}
where $\mathcal{T}:=\omega_1(B_{k_0})^{-\frac{\lambda}{n}} \omega_1(B_k)^{\frac{\alpha_2}{n}}|B_k|^{\frac{\beta }{n}}  \omega_1(B_{k+\ell+i})^{\frac{\lambda-\alpha_1}{n}}$.
Hence, by \eqref{ceq6} and for any $k\le k_0$, it follows that
\begin{align*}
\mathcal{T}\lesssim 2^{(k-k_0)\left(1+\frac{\gamma}{n}\right)\lambda} \left(\frac{1}{t}\right)^{(\lambda-\alpha_1)\left(1+\frac{\gamma}{n}\right)}.
\end{align*}
Consequently, we obtain
\begin{align*}
&\|{{\mathcal H}}_{\Phi,\Omega}^bf \|_{M\dot{K}_{p,q}^{\alpha_2,\lambda}(\omega_1,\omega_2)}\\
&\lesssim \|f\|_{M\dot{K}_{p,q}^{\alpha_1,\lambda}(\omega_1,\omega_2)} \sup\limits_{k_0\in\mathbb{Z}}\left(\sum\limits_{k=-\infty}^{k_0}2^{(k-k_0)\left(1+\frac{\gamma}{n}\right)\lambda p}  \right)^{\frac{1}{p}}\int\limits_0^{\infty}\frac{|\Phi(t)|}{t^{1-{\frac{\gamma}{q}-\frac{n}{q}+(\lambda-\alpha_1)\left(1+\frac{\gamma}{n}\right)}}(1+t^{-1})^{-\beta}}dt\\
&\lesssim \|f\|_{M\dot{K}_{p,q}^{\alpha_1,\lambda}(\omega_1,\omega_2)} \int\limits_0^{\infty}\frac{|\Phi(t)|}{t^{1-{\frac{\gamma}{q}-\frac{n}{q}+(\lambda-\alpha_1)\left(1+\frac{\gamma}{n}\right)}}(1+t^{-1})^{-\beta}}dt.
\end{align*}
Therefore, the proof of the theorem is completed.
\end{proof}

{\textbf{Acknowledgments}}. The authors were supported by the Vietnam National Foundation for Science and Technology Development (NAFOSTED).

\bibliographystyle{amsplain}

\end{document}